\numberwithin{equation}{section}
\newtheorem{theorem}{Theorem}[section]
\newtheorem{lemma}[theorem]{Lemma}
\newtheorem{prop}[theorem]{Proposition}
\theoremstyle{definition}
\newtheorem{remark}[theorem]{Remark}
\theoremstyle{definition}
\theoremstyle{definition}
\def\dashint{\operatorname%
{\,\,\text{\bf-}\kern-.98em\DOTSI\intop\ilimits@\!\!}}
\def\\det{\text{det}}
\def\.5{\frac{1}{2}}
\def\bR{\mathbb{R}}
\def\cB{\mathcal{B}}
\def\cD{\mathcal{D}}
\def\cG{\mathcal{G}}
\newcommand{\RN}[1]{%
  \textup{\uppercase\expandafter{\romannumeral#1}}%
}
\renewcommand{\epsilon}{\varepsilon}
\newcounter{marnote}
\begin{document}
\title[Estimates by Green function method]{Optimal estimates for the conductivity problem by Green's function method}
\author[H. Dong]{Hongjie Dong}
\address[H. Dong]{Division of Applied Mathematics, Brown University,
182 George Street, Providence, RI 02912, USA. Tel: 1-(401)8637297}
\email{Hongjie\_Dong@brown.edu}

\author[H.G. Li]{Haigang Li}
\address[H.G. Li]{School of Mathematical Sciences, Beijing Normal University, Laboratory of Mathematics and Complex Systems, Ministry of Education, Beijing 100875, China.}
\email{hgli@bnu.edu.cn}


\date{\today} 

\begin{abstract}
We study a class of second-order elliptic equations of divergence form, with discontinuous coefficients and data, which models the conductivity problem in composite materials. We establish optimal gradient estimates by showing the explicit dependence of the elliptic coefficients and the distance between interfacial boundaries of inclusions. The novelty of these estimates is that they unify the known results in the literature and answer open problem $(b\,)$ proposed by Li-Vogelius (2000) for the isotropic conductivity problem. We also obtain more interesting higher-order derivative estimates, which answers open problem $(c\,)$ of Li-Vogelius (2000). It is worth pointing out that the equations under consideration in this paper are nonhomogeneous.
\end{abstract}

\maketitle

\section{Introduction and main results}

In this paper, we establish optimal gradient and higher derivative estimates for solutions of the isotropic conductivity problem. The problem is modeled by a class of divergence form second-order elliptic equations with discontinuous coefficients and data
\begin{equation}\label{mainequ}
L_{\varepsilon;r_{1},r_{2}}u:=D_{i}(a(x)D_{i}u)=D_{i}f_{i}\qquad\mbox{in}~~\mathcal{D},
\end{equation}
where the Einstein summation convention in repeated indices is used, $\mathcal{D}$ is a bounded open subset of $\mathbb{R}^{2}$,
$$
a(x)=k_{1}\chi_{\mathcal{B}_{1}}+k_{2}\chi_{\mathcal{B}_{2}}
+\chi_{\cB_0},
$$
$k_{1},k_{2},r_1,r_2\in (0,\infty)$ are constants,
$$
\mathcal{B}_{1}:=B_{r_{1}}({\varepsilon/2}+r_{1},0),\quad
\mathcal{B}_{2}:=B_{r_{2}}(-{\varepsilon/2}-r_{2},0),\quad
\cB_0=\mathbb{R}^{2}\setminus(\overline{\mathcal{B}_{1}\cup\mathcal{B}_{2}}),
$$
and $\chi$ is the indicator function. Here $\mathcal{D}$ models the cross-section of a fiber-reinforced composite, with the disks $\mathcal{B}_{1}$ and $\mathcal{B}_{2}$ representing the cross-sections of the fibers; the remaining subdomain representing the matrix surrounding the fibers. The gradient of the potential $u$ represents the electric field in the conductivity problem or the stress in anti-plane elasticity problem. Moreover, $a(x)$ is the conductivity (for the conductivity problem) or the shear modulus (for the anti-plane shear problem), which is a constant on the fibers, and a different constant on the matrix. The constant $\varepsilon$ is used to denote the distance between $\mathcal{B}_{1}$ and $\mathcal{B}_{2}$. See Figure \ref{fig:1.1}. It is important from a practical point of view to know whether  $|Du|$ can be arbitrarily large as the inclusions get closer to each other. It is also of interest to establish similar estimates for higher-order norms of solutions. The purpose of this paper is to investigate the explicit dependence of $|D^{m}u|$ $(m\geq1)$ on $\epsilon,k_{1},k_{2}$, $r_{1}$, and $r_{2}$.

\begin{figure}
\begin{center}
\includegraphics[width=2.5in]{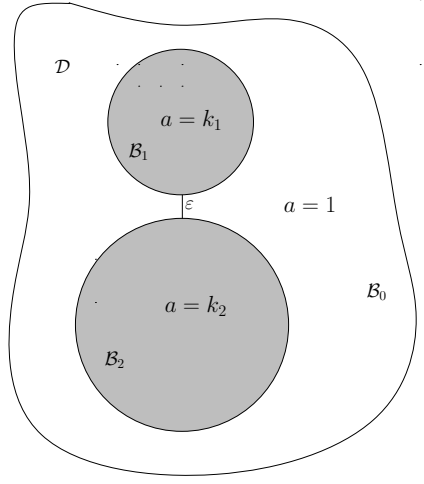}
\caption{A domain with two closely spaced inclusions}
\label{fig:1.1}
\end{center}
\end{figure}

In \cite{ba}, Babu\u{s}ka, Andersson, Smith, and Levin numerically analyzed the initiation and growth of damage in composite materials, in which the inclusions are frequently spaced very closely and even touching. There have been many important work on the gradient estimates for solutions of elliptic equations and systems arising from composite materials. See, for instance,  \cite{bll,bv,d,llby,ln,lv} and the references therein.

For two touching disks in 2D with $k_{1}=k_{2}=k$ away from $0$ and $\infty$, Bonnetier and Vogelius \cite{bv} proved that $|Du|$ remains bounded, with an upper bound depending on the value of $k$. Li and Vogelius \cite{lv} extended the result to general divergence form second-order elliptic equations with piecewise H\"older continuous coefficients in all dimensions, and they proved that $|Du|$ remains bounded as $\varepsilon\searrow 0$. Actually they established stronger, $\varepsilon$-independent, piecewise $C^{1,\alpha}$ estimates for solutions. This extension covers inclusions of arbitrary smooth shape. Later Li-Nirenberg \cite{ln} further extended these results to general divergence form second-order elliptic systems including systems of elasticity.
The estimates in \cite{ln} and \cite{lv} all depend on the ellipticity of the coefficients. In \cite[Page 94]{lv}, Li and Vogelius proposed several interesting questions including the following two:

\noindent (b\,): How does the constant in the estimates depend on the ellipticity constants?

\noindent (c\,): Do similar estimates
hold for higher order norms of the solution, assuming of course all the data are appropriately smooth?

On the other hand, if the ellipticity constants are allowed to partially deteriorate, the situation is very different.
The perfect conductivity problem with two inclusions can be described as follows
\begin{equation}
                    \label{eq9.46}
\begin{cases}
\Delta u=0 \quad &\text{in}~\,\cD\setminus (\overline{\cB_1\cup \cB_2}),\\
Du=0 \quad&\text{in}\,\,\cB_1\cup \cB_2,\\
u|_+=u|_-\quad &\text{on}\,\,\partial \cB_1\cup \partial \cB_2,\\
\int_{\partial \cB_i}(\partial u/\partial \nu)|_+=0\quad&\text{for}\,\,i=1,2.
\end{cases}
\end{equation}
Formally, the system \eqref{eq9.46} above can be obtained from \eqref{mainequ} by setting $f_i=0$ and passing to the limit as $k_1,k_2\nearrow \infty$.
In contrast to the case when $k_1$ and $k_2$ are finite and bounded below from zero,
it was shown in \cite{BC84} and \cite{Ma96} that the gradient of solutions may blow up as two inclusions approach each other with the blow-up rate $\varepsilon^{-1/2}$ for a special solution in the 2D case. Rigorous proofs were later carried out by Ammari, Kang, and Lim \cite{akl}, and Ammari, Kang, Lim, Lee, and Lee \cite{aklll} for the case of circular inclusions. Since then, the problem has been studied by many mathematician. It has been proved that for the two close-to-touching inclusions case the generic rate of $|Du|$ blow-up is $\varepsilon^{-1/2}$ in two dimensions, $|\varepsilon\log\varepsilon|^{-1}$ in three dimensions, and $\varepsilon^{-1}$ in dimensions greater than three. See Yun \cite{y1,y2}, Bao, Li, and Yin \cite{bly1}, as well as Lim and Yun \cite{ly}, and Ammari, Bonnetier, Triki, and Vogelius \cite{abtv}. We also mention that more detailed characterizations of the singular behavior of $|Du|$ have been obtained by Ammari, Ciraolo, Kang, Lee, and Yun \cite{ackly}, Ammari, Kang, Lee, Lim, and Zribi \cite{akllz}, Bonnetier and Triki \cite{bt0,bt} and Kang, Lim, and Yun \cite{kly,kly2}.

Similar to the perfect conductivity problem, the insulated conductivity problem can also be derived from \eqref{mainequ} by passing to the limit as $k_1,k_2\searrow 0$, which can be considered as the complementary problem to the perfect case. The corresponding system is then given by
\begin{equation}
                    \label{eq10.18}
\begin{cases}
\Delta u=0 \quad &\text{in}~\,\cD\setminus (\overline{\cB_1\cup \cB_2}),\\
\partial u/\partial\nu=0\quad  &\text{on}\,\,\partial \cB_1\cup \partial \cB_2.
\end{cases}
\end{equation}
As in the perfect case, the gradient of solutions to \eqref{eq10.18} generally blows up as the distance between the inclusions goes to zero. In 2D, the authors of \cite{akl,aklll} obtained the optimal bound for circular inclusions with comparable radii and the blow-up rate is $\varepsilon^{-1/2}$. The proof uses harmonic conjugators to convert the insulated case to the perfect case, which no longer works when $d\ge 3$. For the higher dimensional case, Bao, Li, and Yin \cite{bly2} established upper bound $\varepsilon^{-1/2}$ by using a flipping technique and a variant of Li-Nirenberg's result. We point out that in all these papers, the equation is assumed to be homogeneous, i.e., $f_i\equiv 0$, $i=1,2$.

The main purpose of this paper is to give a uniform expression for these three situations. The key strategy is to construct a Green's function for the elliptic operator $L_{\varepsilon;r_{1},r_{2}}$ with piecewise constant coefficients in order to express the solution explicitly. We then establish unified gradient estimates for the solutions of \eqref{mainequ}, which involve a precise dependence on $k_{1},k_{2},r_{1},r_{2}$, and $\varepsilon$. This answers the open problem $(b\,)$ proposed by Li-Vogelius \cite{lv} in the case of circular inclusions. Furthermore, regarding the open problem $(c\,)$, we obtain an upper bound for higher derivatives, which extends the results of Li-Vogelius \cite{lv} and Dong-Zhang \cite{dz}, where the case when $\varepsilon=0$ was studied.

Let $k_0=1$,
$$\alpha=\frac{k_1-1}{k_1+1},\quad\mbox {and} \quad\beta=\frac{k_2-1}{k_2+1}.$$
To illustrate the main ideas of the proof, we first assume that $r_{1}=r_{2}=1$ and show the dependence of $|Du|$ and $|D^{m}u|$ with respect to $k_{1}$, $k_{2}$, and $\varepsilon$. The general case is treated in Theorem \ref{mainthm3}.

In the first theorem below, we consider a weak solution to \eqref{mainequ} in a neighborhood of the origin, which may not entirely contain the balls $\cB_1$ and $\cB_2$.

\begin{theorem}\label{mainthm1}
Let $\varepsilon\in (0,1/2)$ and $\gamma\in (0,1)$ be constants. Assume that $u$ is a weak solution of \eqref{mainequ} in $B_1:=B_{1}(0)$ with $r_{1}=r_{2}=1$ and $f$ is piecewise $C^\gamma$ in $B_1$, which satisfy for some constant $C_1>0$,
\begin{equation}
                            \label{eq11.56}
\|u\|_{L_2(B_1)}\le C_1,\quad \|f\|_{C^\gamma(B_1\cap \cB_j)}\le C_{1}k_j,\quad j=0,1,2.
\end{equation}
Then we have
\begin{equation}\label{equ_mainresult}
|Du|\le \frac{CC_1}{1-(1-\sqrt\varepsilon)|\alpha\beta|}\quad\text{in}\,\,B_{1/2},
\end{equation}
where $C>0$ is a constant depending only on $\gamma$.
Furthermore, for any integer $m\geq2$, if $f$ is piecewise $C^{m-1,\gamma}$ in $B_1$, and for some constant $C_m>0$,
\begin{equation}
                                    \label{eq4.43}
\|u\|_{L_2(B_1)}\le C_m,\quad \|f\|_{C^{m-1,\gamma}(B_1\cap \cB_j)}\le C_m\,k_j,\quad j=0,1,2,
\end{equation}
then we have
\begin{equation}\label{equ_mainresultm}
|D^{m}u|\le
\frac{CC_m}{\big(1-(1-\sqrt\varepsilon)|\alpha\beta|\big)^m}
\quad\text{in}\,\,B_{1/2},
\end{equation}
where $C>0$ is a constant depending only on $m$ and $\gamma$.
\end{theorem}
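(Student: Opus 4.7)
My plan is to follow the Green's function strategy highlighted in the introduction. The first step is to construct an explicit Green's function $G(x,y)$ for the operator $L_{\varepsilon;1,1}$ in the whole plane. Starting with the free-space kernel $\frac{1}{2\pi}\log|x-y|$, one adds reflected copies produced by iterated Kelvin transforms across $\partial\mathcal{B}_1$ and $\partial\mathcal{B}_2$: the transmission conditions $u|_+=u|_-$ and continuity of $a\,\partial_\nu u$ across each circle select the multiplicative scalars $\alpha=(k_1-1)/(k_1+1)$ across $\partial\mathcal{B}_1$ and $\beta=(k_2-1)/(k_2+1)$ across $\partial\mathcal{B}_2$. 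This produces a series
\begin{equation*}
G(x,y)=\frac{1}{2\pi}\log|x-y|+\frac{1}{2\pi}\sum_{w}c(w)\log|x-T_w(y)|,
\end{equation*}
indexed by alternating words $w$ in $\{1,2\}$, with $T_w$ the corresponding composition of reflections and $c(w)$ the product of the associated $\alpha$'s and $\beta$'s.

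With this in hand, integration by parts gives, for $x\in B_{1/2}$, the representation
\begin{equation*}
u(x)=w_0(x)-\int_{B_1} D_{y_i}G(x,y)\,f_i(y)\,dy,
\end{equation*}
where $w_0$ satisfies $L_\varepsilon w_0=0$ in $B_1$ with $w_0=u$ on $\partial B_1$. The $L_2$ bound on $u$ controls $w_0$ through a Poisson-type kernel built from the same $G$, so it remains to estimate the integral term. Differentiating in $x$ and applying classical Schauder-type estimates for the Laplace kernel to each reflected copy, the piecewise $C^\gamma$ regularity of $f$ reduces $|Du(x)|$ to summing a series of gradient bounds on the reflected fundamental solutions.

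The crux is a geometric estimate: the Möbius composition $T_{21}=R_2\circ R_1$ is a hyperbolic contraction whose multiplier on the neck between $\mathcal{B}_1$ and $\mathcal{B}_2$ is bounded by $1-\sqrt\varepsilon$ up to universal constants. The cleanest way to see this is to conjugate by a Möbius map sending $\mathcal{B}_1$, $\mathcal{B}_2$ to two concentric disks, in which model $R_2R_1$ becomes an explicit dilation whose ratio is computable in terms of the conformal modulus of the exterior region, and hence of $\varepsilon$. Combined with the coefficient $|\alpha\beta|$ attached to each length-$2$ word, this gives a geometric decay $\bigl((1-\sqrt\varepsilon)|\alpha\beta|\bigr)^n$ for the $n$-th-depth contribution to $|D_xG|$ on $B_{1/2}\times B_1$; summing the geometric series yields $C(1-(1-\sqrt\varepsilon)|\alpha\beta|)^{-1}$, which is \eqref{equ_mainresult}. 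For the higher-derivative bound \eqref{equ_mainresultm}, differentiating $G$ in $x$ a total of $m$ times introduces an extra factor on each $n$-fold reflected copy controlled by the $n$-fold Möbius derivative, so the series becomes essentially $\sum_n n^{m-1}\bigl((1-\sqrt\varepsilon)|\alpha\beta|\bigr)^n$, summing to a constant depending on $m$ times $(1-(1-\sqrt\varepsilon)|\alpha\beta|)^{-m}$.

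The principal obstacle I anticipate is the simultaneous management of two technical issues: verifying that the infinite reflected series converges and satisfies the transmission conditions termwise and in the limit, so the constructed object really is the Green's function for $L_{\varepsilon;1,1}$, and extracting the \emph{optimal} $1-\sqrt\varepsilon$ contraction factor (as opposed to a weaker $1-c\varepsilon$) from the Möbius conjugation. A secondary complication is that $B_1$ does not entirely contain $\mathcal{B}_1\cup\mathcal{B}_2$, so some reflected source points $T_w(y)$ for $y\in B_1$ fall outside $B_1$; these out-of-domain contributions must be absorbed into the correction $w_0$ via a cutoff argument and an interior estimate.
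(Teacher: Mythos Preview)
Your overall strategy---build the whole-plane Green's function for $L_{\varepsilon;1,1}$ by iterated reflections with weights $\alpha,\beta$, identify the contraction rate of the M\"obius composition as $\sim 1-\sqrt\varepsilon$, and sum the geometric series---is exactly the paper's, including the $n^{m-1}$ factor from Fa\`a di Bruno for the higher derivatives. Two points of execution, however, differ from the paper in ways that matter.

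First, your localization step has a gap. You write $u=w_0-\int_{B_1}D_{y_i}G\,f_i$ with $w_0$ an $L_\varepsilon$-harmonic function in $B_1$ matching $u$ on $\partial B_1$, and say $\|u\|_{L_2}$ controls $w_0$ ``through a Poisson-type kernel built from the same $G$.'' But $G$ is the whole-plane Green's function, not the one for $B_1$; you do not have a Poisson kernel for $L_\varepsilon$ on $B_1$, and bounding $|Dw_0|$ in $B_{1/2}$ with the sharp $\varepsilon$-dependence is precisely the estimate you are trying to prove, so the argument is circular. The paper avoids this entirely: it multiplies $u$ by a cutoff $\eta\in C_0^\infty(B_1)$, so $v=u\eta$ solves the equation in all of $\bR^2$ with compactly supported data $\tilde f_i$ (and a zeroth-order term $\tilde f_3$). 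Then $\tilde u$ is defined directly by the whole-plane $G$, and one shows $v-\tilde u$ is constant by a growth bound at infinity combined with De Giorgi--Nash--Moser scaling, which is $\varepsilon$-independent. This Liouville step replaces your $w_0$ analysis and is the missing piece.

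Second, a structural point: the paper writes $G$ with the iterated reflections acting on $x$, not on $y$. This is not cosmetic. With reflections on $x$, the integral term becomes a finite sum of expressions of the form $h_j\big((\Phi_2\Phi_1)^l(x)\big)$ with $h_j$ a \emph{fixed} Newtonian potential of the data, whose piecewise $C^{m,\gamma}$ norm is controlled once and for all by Lemma~\ref{lem2}. The chain rule then produces exactly the factor $|D^m(\Phi_2\Phi_1)^l|\le Cl^{m-1}(1+2\sqrt\varepsilon)^{-2l}$ from the M\"obius iterate, which is where the geometric decay enters. With reflections on $y$ as you wrote, you would instead face integrals of $D_x\log|x-T_w(y)|$ with moving singularities $T_w(y)$ accumulating near the neck, and the bookkeeping is substantially harder.
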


In the next theorem, we obtain more precise estimates in $\cB_1$ and $\cB_2$ by assuming that $u$ satisfies \eqref{mainequ} in a domain which contains both $\cB_1$ and $\cB_2$.

\begin{theorem}\label{mainthm2}
Let $\varepsilon\in (0,1/2)$ and $\gamma\in (0,1)$ be constants. Assume that $\cB_1\cup \cB_2\Subset \cD_1\Subset \cD_2\Subset \cD$ for some domains $\cD_1$ and $\cD_2$, $u$ is a weak solution of \eqref{mainequ} in $\cD$ with $r_{1}=r_{2}=1$, and $f$ is piecewise $C^\gamma$ in $\cD$, which satisfy for some constant $C_1>0$,
\begin{equation}\label{def_C1}
\|u\|_{L_2(\cD)}\le C_1,\quad \|f\|_{C^\gamma(\cD\cap \cB_j)}\le C_1\min\{1,k_j\},\quad j=0,1,2.
\end{equation}
Then we have
\begin{equation}\label{equ_mainresultb}
|Du|\le
\begin{cases}
\frac{CC_1}{1-(1-\sqrt\varepsilon)|\alpha\beta|}
&\qquad\mbox{in}~\cD_{1}\cap \cB_0,\\
\frac{1}{k_{1}+1}\cdot\frac{CC_1}{1-(1-\sqrt\varepsilon)|\alpha\beta|}
&\qquad\mbox{in}~\mathcal{B}_{1},\\
\frac{1}{k_{2}+1}\cdot\frac{CC_1}{1-(1-\sqrt\varepsilon)|\alpha\beta|}
&\qquad\mbox{in}~\mathcal{B}_{2},
\end{cases}
\end{equation}
where $C>0$ is a constant depending only on $\gamma$, $\cD_1$, $\cD_2$, and $\cD$.
Furthermore, for any integer $m\geq2$, if $f$ is piecewise $C^{m-1,\gamma}$ in $\cD$, and for some constant $C_m>0$,
\begin{equation}\label{def_Cm}
\|u\|_{L_2(\cD)}\le C_m,\quad \|f\|_{C^{m-1,\gamma}(\cD\cap \cB_j)}\le C_m\min\{1,k_j\},\quad j=0,1,2,
\end{equation}
then we have
\begin{equation}\label{equ_mainresultmb}
|D^{m}u|\le
\begin{cases}
\frac{CC_m}{\left(1-(1-\sqrt\varepsilon)|\alpha\beta|\right)^m}
&\qquad\mbox{in}~ \cD_{1}\cap\cB_0,\\
\frac{1}{k_{1}+1}\cdot\frac{CC_m}{\left(
1-(1-\sqrt\varepsilon)|\alpha\beta|\right)^m}
&\qquad\mbox{in}~\mathcal{B}_{1},\\
\frac{1}{k_{2}+1}\cdot\frac{CC_m}{\left(
1-(1-\sqrt\varepsilon)|\alpha\beta|\right)^m}&\qquad\mbox{in}~\mathcal{B}_{2},
\end{cases}
\end{equation}
where $C>0$ is a constant depending only on $m$, $\gamma$, $\cD_1$, $\cD_2$, and $\cD$.
\end{theorem}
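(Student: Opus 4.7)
The plan is to split the proof into two stages: first, deduce the estimate on $\cD_1\cap\cB_0$ from Theorem \ref{mainthm1} by localization and rescaling; second, upgrade the exterior bound by a factor $1/(k_j+1)$ on each inclusion $\cB_j$, using the transmission conditions together with the interior equation.

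For the first stage, I would cover $\cD_1\cap\cB_0$ by balls adapted to the geometry. At points well-separated from $\overline{\cB_1\cup\cB_2}$, standard interior Schauder estimates for $\Delta u = D_if_i$ give $|D^mu|\le CC_m$, which is absorbed into the right-hand side of \eqref{equ_mainresultb} or \eqref{equ_mainresultmb}. Near the narrow neck between the inclusions, I invoke Theorem \ref{mainthm1} on a rescaled ball containing the neck; since $\min\{1,k_j\}\le k_j$, hypothesis \eqref{def_C1} (resp.\ \eqref{def_Cm}) implies \eqref{eq11.56} (resp.\ \eqref{eq4.43}), and $\|u\|_{L^2(\cD)}\le C_1$ controls the $L^2$ norm on the rescaled ball, with the rescaling constants absorbed into $C$.

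For the second stage, note that in $\cB_j$ the equation reduces to the Poisson equation $\Delta u = k_j^{-1}D_if_i$, whose inhomogeneity satisfies $\|k_j^{-1}f\|_{C^\gamma(\cB_j)}\le C_1\min\{1,k_j\}/k_j\simeq C_1/(k_j+1)$; this already carries the desired factor. The transmission condition
$$
k_j\,\partial_\nu u|_- - f|_-\!\cdot\!\nu \;=\; \partial_\nu u|_+ - f|_+\!\cdot\!\nu \qquad\text{on }\partial\cB_j,
$$
combined with the stage-one bound on $|Du|$ in $\cB_0$, controls $\partial_\nu u|_-$ on $\partial\cB_j$. Setting $v_j:=u-\overline u_j$, where $\overline u_j$ is the mean of $u$ on $\cB_j$, $v_j$ solves a Neumann problem on the unit disk whose source and Neumann datum are both of order $CC_1/[(k_j+1)(1-(1-\sqrt\varepsilon)|\alpha\beta|)]$ in suitable H\"older norms (the H\"older regularity of $\partial_\nu u|_+$ is supplied by the $m=2$ case of Theorem \ref{mainthm1}). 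Schauder estimates for the Neumann problem on the disk then yield the desired bound $|Du|=|Dv_j|$ in $\cB_j$. The higher-order case $m\ge 2$ iterates the scheme: \eqref{equ_mainresultm} supplies the exterior $D^mu$ bound, and inside $\cB_j$ one differentiates the equation and bootstraps with the same transmission-plus-Neumann-representation argument.

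The main obstacle is the tangential-derivative improvement. The continuity of $D_\tau u$ across $\partial\cB_j$ alone only transmits the exterior bound, with no $1/(k_j+1)$ gain; recovering this gain in all directions requires a \emph{global} Neumann representation on the disk rather than the pointwise transmission condition. Verifying that the constants are uniform across the entire range $k_j\in(0,\infty)$, so that the bound is sharp both in the perfect-conductor limit $k_j\to\infty$ and the insulating limit $k_j\to 0$, is the most delicate part of the bookkeeping.
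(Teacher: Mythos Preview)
Your two-stage strategy is genuinely different from the paper's, and Stage~2 has a real gap that costs you the optimal power of $\bigl(1-(1-\sqrt\varepsilon)|\alpha\beta|\bigr)^{-1}$.

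To run Schauder for the Neumann problem on $\cB_j$, you need the Neumann datum $\partial_\nu u|_-$ in $C^\gamma(\partial\cB_j)$, not just in $L^\infty$. Through the transmission relation this means you need $\|Du\|_{C^\gamma}$ from the $\cB_0$ side. You propose to supply this by ``the $m=2$ case of Theorem~\ref{mainthm1}'', but that is illegitimate on two counts. First, the $m=2$ estimate requires $f\in C^{1,\gamma}$, which is not part of the hypothesis \eqref{def_C1} for the gradient bound. Second, even when $f$ is smooth enough, Theorem~\ref{mainthm1} gives $|D^2u|\le CC_2\bigl(1-(1-\sqrt\varepsilon)|\alpha\beta|\bigr)^{-2}$, so the H\"older seminorm $[Du]_{C^\gamma}$ you feed into the Neumann estimate already carries at least $\bigl(1-(1-\sqrt\varepsilon)|\alpha\beta|\bigr)^{-1-\gamma}$. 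After dividing by $k_j$ you land at
\[
|Du|\ \lesssim\ \frac{1}{k_j+1}\cdot\frac{C}{\bigl(1-(1-\sqrt\varepsilon)|\alpha\beta|\bigr)^{1+\gamma}}
\quad\text{in }\cB_j,
\]
strictly worse than \eqref{equ_mainresultb}. For $m\ge 2$ the same loss recurs at each level of the bootstrap, so you never recover \eqref{equ_mainresultmb}. The ``global Neumann representation'' you invoke does not rescue this: the Neumann kernel on the disk is only a bounded operator $C^\gamma(\partial\cB_j)\to C^{1,\gamma}(\cB_j)$, not $L^\infty(\partial\cB_j)\to C^{1}(\cB_j)$.

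The paper avoids any boundary-regularity bootstrap. It reruns the Green's-function argument of Theorem~\ref{mainthm1} with a cutoff $\eta\equiv 1$ on $\cD_1\supset\overline{\cB_1\cup\cB_2}$, so that $D\eta$ lives entirely in $\cB_0$; this upgrades \eqref{eq11.56b} to $\|\tilde f_i\|_{C^\gamma(\cB_j)}\le C\min\{1,k_j\}$ and hence $\|h_j\|_{C^{1,\gamma}}\le C\min\{1,k_j\}$. The factor $1/(k_j+1)$ then falls out directly from the explicit prefactors $2/(k_j+1)$ in the formula for $G(x,y)$ when $x\in\cB_j$, together with $\min\{1,k_j\}/k_j\le 2/(k_j+1)$ for the residual term $k_1^{-1}h_1(x)$. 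No appeal to exterior H\"older control of $Du$ is made, so no extra power of $\bigl(1-(1-\sqrt\varepsilon)|\alpha\beta|\bigr)^{-1}$ is incurred.
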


In the case of circular inclusions, the estimates \eqref{equ_mainresult} and \eqref{equ_mainresultb} unify the known results in the literature:
\begin{enumerate}
\item boundedness of $|Du|$ for finite $k_{1}$ and $k_{2}$ regardless of the distance $\varepsilon$ obtained in \cite{bv,ln,lv};

\item  blow-up of $|Du|$ with rate of $1/\sqrt{\varepsilon}$ for $k_{1}=k_{2}=+\infty$ established in \cite{ackly,akl,aklll,bly1,y1,y2};

\item blow-up of $|Du|$ with rate of $1/\sqrt{\varepsilon}$ for $k_{1}=k_{2}=0$ established \cite{akl,aklll}.
\end{enumerate}
This is the first main contribution of this paper. It is worth pointing out that the estimate in $\mathcal{D}\cap\mathcal{B}_{0}$ of \eqref{equ_mainresultb} was achieved by using a completely different method, single layer potential method, in \cite{akl,aklll,bly1,y1,y2}. Compared to the references mentioned above, we also consider more general non-homogeneous equations.
The second main contribution of our paper is the higher-order derivative estimates \eqref{equ_mainresultm} and \eqref{equ_mainresultmb}, extending the results of Li-Vogelius \cite{lv} and more recent work of Dong-Zhang \cite{dz}, where the boundedness of the higher derivatives was proved when two balls are assumed to touch each other at the origin. This in particular answers in the affirmative the conjecture in Li-Vogelius \cite[Remark 8.2, pp 137]{lv}:
\begin{quote}
{\em ``We do feel, however, that for $0 < a_0 < 1$ (as is the case here) the smoothness exhibited by $u_0$ makes it quite likely that the $u_\varepsilon$ have piecewise defined, uniformly bounded derivatives of any order...''}
\end{quote}
Here {\em $u_{0}$} stands for the solution when $\varepsilon=0$, i.e., the two balls touch each other.

\begin{remark}
The following example gives a lower bound of the gradient, which shows that \eqref{equ_mainresult} is optimal when $k_1,k_2\ge 1$. Choose $f_1$ to be a nonnegative function supported in $B_{1/10}(-3,0)$ which is a even function in $x_2$ with unit integral, and $f_2\equiv 0$.
Assume that $u$ is a weak solution to \eqref{mainequ} with $f$ defined above and $\mathcal{B}_{1}\cup\mathcal{B}_{2}\subset\mathcal{D}$. Then we have
\begin{equation}\label{equ_mainresult_lower}
|Du(0)|\geq
\frac{C}{1-(1-\sqrt\varepsilon)\alpha\beta}
\end{equation}
for some constant $C>0$ independent of $\varepsilon$, $k_1$, and $k_2$.
See Section \ref{subsec_lowerbound} for more details. For the special case when $k_{1}=k_{2}=k$, we can rewrite \eqref{equ_mainresult} and \eqref{equ_mainresult_lower} as follows to get more transparent dependence of $|Du|$ on $k$ and $\varepsilon$: when $k\gg 1$,
\begin{equation*}
|Du|\le \frac{CC_1}{\sqrt\varepsilon+1/k}\quad\text{in}\,\,B_{1/2}
\quad\mbox{and}\quad
|Du(0)|\geq
\frac{C}{\sqrt\varepsilon+1/k};
\end{equation*}
when $k\ll 1$,
\begin{equation*}
|Du|\le \frac{CC_1}{\sqrt\varepsilon+k}\quad\text{in}\,\,B_{1/2}
\quad\mbox{and}\quad
|Du(0)|\geq
\frac{C}{\sqrt\varepsilon+k}.
\end{equation*}
\end{remark}

\begin{remark}
When $\mathcal{B}_{1}\cup\mathcal{B}_{2}\subset\mathcal{D}$ and $\varepsilon$ is fixed small, it is easy to see from \eqref{equ_mainresultb} that as $k_{1},k_{2}\to \infty$, $\alpha,\beta\to 1$. Therefore, we not only have the blow-up rate $1/\sqrt{\varepsilon}$ in the narrow region $\mathcal{D}\cap\mathcal{B}_{0}$, but also show that the upper bounds in $\mathcal{B}_{1}$ and $\mathcal{B}_{2}$ tend to zero as $k_1,k_2\to \infty$, which are consistent with the condition in \cite{bly1} that $Du=0$ in  $\mathcal{B}_{1}\cup\mathcal{B}_{2}$, derived by a variational argument. See \cite[the Appendix]{bly1}.
\end{remark}

Finally we give the following theorem for the general case that $r_{1}$ and $r_{2}$ are not necessarily equal to $1$.

\begin{theorem}\label{mainthm3}

Let $\varepsilon\in (0,1/2)$, $\gamma\in (0,1)$ be constants and $\varepsilon\ll\,r_{1},r_{2}<10$. Under the assumptions of Theorem \ref{mainthm2}, we have, for any integer $m\geq1$,
\begin{equation*}
|D^{m}u|\le
\begin{cases}
\frac{CC_m}{\left(1-\big(1-\sqrt{2(1/r_{1}
+1/r_{2})\varepsilon}\big)|\alpha\beta|\right)^m}
&\qquad\mbox{in}~ \cD_{1}\cap\cB_0,\\
\frac{CC_m}{k_{1}+1}\cdot\frac{1}{\left(
1-\big(1-\sqrt{2(1/r_{1}
+1/r_{2})\varepsilon}\big)|\alpha\beta|\right)^m}
&\qquad\mbox{in}~\mathcal{B}_{1},\\
\frac{CC_m}{k_{2}+1}\cdot\frac{1}{\left(
1-\big(1-\sqrt{2(1/r_{1}
+1/r_{2})\varepsilon}\big)|\alpha\beta|\right)^m}&\qquad\mbox{in}~\mathcal{B}_{2},
\end{cases}
\end{equation*}
where $C>0$ is a constant depending only on $m$, $\gamma$, $\cD_1$, $\cD_2$, and $\cD$.
\end{theorem}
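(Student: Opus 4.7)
The plan is to reduce Theorem~\ref{mainthm3} to the already-proven Theorem~\ref{mainthm2} via a M\"obius transformation of the plane, exploiting the fact that in two dimensions conformal changes of variables preserve the structure of the divergence-form equation~\eqref{mainequ} with piecewise constant isotropic coefficients. Any two disjoint disks in $\mathbb{R}^2$ can be mapped by a M\"obius transformation $\Phi$ to a pair of disks with a prescribed common radius; this is classical M\"obius geometry, and the relevant parameter count checks out (six real M\"obius parameters versus six real parameters for the configuration of two disks).

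First I would construct such a $\Phi$ explicitly, for instance by locating the two fixed points $p_{\pm}$ of the composition of inversions across $\partial\cB_1$ and $\partial\cB_2$, which lie on the segment between the disks at distance of order $\sqrt{2 r_1 r_2\varepsilon/(r_1+r_2)}$ from the origin for small $\varepsilon$, and choosing $\Phi$ to send $p_{\pm}$ to symmetric locations while normalizing the radii. A direct computation shows that the resulting distance $\tilde\varepsilon$ between the two image unit disks satisfies $\sqrt{\tilde\varepsilon}\sim\sqrt{2(1/r_1+1/r_2)\varepsilon}$. Under the change of variables $v=u\circ\Phi^{-1}$, conformal invariance of $\Div(a\,\nabla\cdot)$ in 2D yields a divergence-form equation of the same type~\eqref{mainequ} with the same coefficient values $k_1,k_2,1$ on the image disks and a new source $\tilde f$ that is piecewise $C^{m-1,\gamma}$ with norms controlled by those of $f$ up to constants depending only on the geometry of $\Phi$.

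Next I would apply Theorem~\ref{mainthm2} to $v$ on the transformed domains $\Phi(\cD_1)\Subset\Phi(\cD_2)\Subset\Phi(\cD)$, obtaining
\[
|D^m v|\le \frac{C\,C_m}{\bigl(1-(1-\sqrt{\tilde\varepsilon})|\alpha\beta|\bigr)^m}
\]
in $\Phi(\cD_1)\cap\Phi(\cB_0)$, with the refined $(k_j+1)^{-1}$ prefactor inside $\Phi(\cB_j)$ for $j=1,2$. Converting back to the original coordinates via the chain rule introduces a factor of $|D\Phi|^m$ together with lower-order derivative terms of $\Phi$. Since $\varepsilon\ll r_1,r_2\le 10$ and we are working on compact subdomains $\cD_1\Subset\cD_2\Subset\cD$ at positive distance from the poles of $\Phi^{-1}$, the quantities $|D^j\Phi|$ are uniformly bounded by constants depending only on $\cD_1,\cD_2,\cD$; these factors are absorbed into $C$. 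Substituting $\sqrt{\tilde\varepsilon}\sim\sqrt{2(1/r_1+1/r_2)\varepsilon}$ yields the claimed estimate.

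The main obstacle is the bookkeeping for the M\"obius change of variables: verifying that the derivatives of $\Phi$ remain uniformly controlled independently of $r_1,r_2\in(\varepsilon,10]$ on the relevant subdomains, and accurately computing how $\tilde\varepsilon$ depends on $(\varepsilon,r_1,r_2)$. An alternative route, more faithful to the methodology of Theorems~\ref{mainthm1}--\ref{mainthm2}, would be to redo the Green's function construction directly for general $r_1,r_2$ using the method of images across $\partial\cB_1$ and $\partial\cB_2$: the iterated reflections generate image charges converging geometrically to the fixed points $p_{\pm}$, each round of reflection contributes a factor $\alpha\beta$, and the contraction rate of the image positions is exactly $1-\sqrt{2(1/r_1+1/r_2)\varepsilon}$ to leading order, which is what produces the denominator in the stated bound.
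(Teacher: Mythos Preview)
Your ``alternative route''---redoing the Green's function construction and the derivative estimates of the iterated inversions directly for general $r_1,r_2$---is precisely what the paper does. The paper writes out $\Phi_1,\Phi_2$ for arbitrary radii, computes the fixed points of $\Phi_2\Phi_1$ to obtain $\lambda_2\sim r_1r_2\bigl(-1-\sqrt{2(1/r_1+1/r_2)\varepsilon}\bigr)$, and proves the analogue of Lemma~\ref{lem_Phi_bound} (Lemma~\ref{lem_Phi_boundr1r2}) with contraction factor $\bigl(1+\sqrt{2(1/r_1+1/r_2)\varepsilon}\bigr)^{-2l}$. With that in hand, Proposition~\ref{prop1} and the proof of Theorem~\ref{mainthm2} go through verbatim.

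Your primary approach via a global M\"obius reduction is genuinely different and, in outline, sound: the inversive distance between the two circles is the conformal invariant in play, and your relation $\tilde\varepsilon\sim\tfrac12(1/r_1+1/r_2)\varepsilon$ matches the denominator in the statement up to harmless constants. What the conformal reduction buys you is that you avoid recomputing the iterated-inversion estimates from scratch. What it costs you is exactly the bookkeeping you flag: the constant in Theorem~\ref{mainthm2} depends on the nested domains, so after transforming you must control $\Phi(\cD_1)\Subset\Phi(\cD_2)\Subset\Phi(\cD)$ and the derivatives $|D^j\Phi|$, $|D^j\Phi^{-1}|$ uniformly in $(r_1,r_2,\varepsilon)$; in particular you must pin the pole of $\Phi$ well outside $\cD$ and argue this can be done uniformly when $r_1\neq r_2$. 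This is doable but not free. The paper's route sidesteps all of it by keeping the geometry fixed and redoing only the one computation that actually changes---the contraction rate of $(\Phi_2\Phi_1)^l$---which is a few lines.
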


The remaining part of the paper is organized as follows. Section \ref{sec_Green} is devoted to the construction of a Green's function of the operator $L_{\varepsilon;r_{1},r_{2}}$ by using the inversion maps $\Phi_1$ and $\Phi_2$ with respect to the circles $\partial\cB_1$ and $\partial\cB_2$. In Section \ref{sec_thm12} we first derive the derivative estimates of the maps $\Phi_{1}\Phi_{2}$ and $\Phi_{2}\Phi_{1}$, as well as their compositions. We then prove our main results, Theorems \ref{mainthm1} and \ref{mainthm2}. Finally, Theorem \ref{mainthm3} is proved in Section \ref{sec_thm3}.

\section{Construction of a Green's function}\label{sec_Green}

In this section, we construct a Green's function of the operator $L_{\varepsilon;r_{1},r_{2}}$ by adapting an idea in \cite{lv}. It is well known that
$$-\frac{1}{2\pi}\Delta\log|x-y|=\delta(x-y).$$
For simplicity of exposition, we write $\Delta\log|x-y|=\delta(x-y)$.
In order to illustrate the main ideas, we assume that $r_{1}=r_{2}=1$. The general case is similar. See Section \ref{sec_thm3}.  We use $L$ to denote the operator $L_{\varepsilon;1,1}$.

Define the inversion maps of a point $x\in\mathbb{R}^{2}$ with respect to $\partial\mathcal{B}_{1}=\partial{B}_{1}(1+{\varepsilon/2},0)$ and $\partial\mathcal{B}_{2}=\partial{B}_{1}(-1-{\varepsilon/2},0)$, respectively, by
\begin{equation}\label{Phi1real}
\Phi_{1}(x_{1},x_{2}):=\left(\frac{x_{1}-(1+ \varepsilon/2)}{(x_{1}-1- \varepsilon/2)^{2}+x_{2}^{2}}+1+\varepsilon/2,~\frac{x_{2}}{(x_{1}-1- \varepsilon/2)^{2}+x_{2}^{2}}\right)
\end{equation}
and
$$\Phi_{2}(x_{1},x_{2}):=\left(~\frac{x_{1}+1+ \varepsilon/2}{(x_{1}+1+ \varepsilon/2)^{2}+x_{2}^{2}}-1-\varepsilon/2,~\frac{x_{2}}{(x_{1}+1+ \varepsilon/2)^{2}+x_{2}^{2}},\right).$$

Recall that
$$
\alpha=\frac{k_{1}-1}{k_{1}+1}\quad\text{and}\quad\beta=\frac{k_{2}-1}{k_{2}+1}.
$$
It is clear that $\alpha,\beta\in(-1,1)$. We define an auxiliary function $\mathcal{G}(x,y)$ as follows.

(1) When $y\in\mathcal{B}_{0}$, $\mathcal{G}(x,y)$ equals
\begin{align*}
&{\frac 2 {k_{1}+1}}\sum_{l=0}^{\infty}(\alpha\beta)^{l}
\Big(\log|(\Phi_{1}\Phi_{2})^{l}(x)-y|
-\beta\log|(\Phi_{2}\Phi_{1})^{l}\Phi_{2}(x)-y|\Big)\\
&\quad\hspace{8cm}\mbox{for}~x\in\overline{\mathcal{B}}_{1};\\
&\log|x-y|+\sum_{l=1}^{\infty}\Big[(\alpha\beta)^{l}
\Big(\log|(\Phi_{1}\Phi_{2})^{l}(x)-y|+\log|(\Phi_{2}\Phi_{1})^{l}(x)-y|\Big)\\
&\quad -(\alpha\beta)^{l-1}\Big(\beta\log|(\Phi_{2}\Phi_{1})^{l-1}\Phi_{2}(x)-y|
+\alpha\log|(\Phi_{1}\Phi_{2})^{l-1}\Phi_{1}(x)-y|\Big)\Big]\\
&\quad\hspace{8cm}\mbox{for}~x\in\mathcal{B}_{0};\\
&{\frac 2 {k_{2}+1}}\sum_{l=0}^{\infty}(\alpha\beta)^{l}
\Big(\log|(\Phi_{2}\Phi_{1})^{l}(x)-y|-
\alpha\log|(\Phi_{1}\Phi_{2})^{l}\Phi_{1}(x)-y|\Big)\\
&\quad\hspace{8cm}\mbox{for}~x\in\overline{\mathcal{B}}_{2}.
\end{align*}

(2) When $y\in\mathcal{B}_{1}$, $\mathcal{G}(x,y)$ equals
\begin{align*}
&{\frac 1 {k_1}}\big(\log|x-y|+\alpha\log|\Phi_{1}(x)-y|\big)
-{\frac{4\beta}{(k_{1}+1)^2}}\sum_{l=0}^{\infty}(\alpha\beta)^{l}
\log|(\Phi_{2}\Phi_{1})^{l}\Phi_{2}(x)-y|\\
&\quad\hspace{7.3cm}\mbox{for}~x\in\overline{\mathcal{B}}_{1}\setminus\{(1+\varepsilon/2,0)\};\\
&{\frac 2 {k_{1}+1}}\sum_{l=0}^{\infty}(\alpha\beta)^{l}
\Big(\log|(\Phi_{2}\Phi_{1})^{l}(x)-y|
-\beta\log|(\Phi_{2}\Phi_{1})^{l}\Phi_{2}(x)-y|\Big)\\
&\quad\hspace{7.3cm}\mbox{for}~x\in\mathcal{B}_{0};\\
&{\frac{4}{(k_{1}+1)(k_{2}+1)}}\sum_{l=0}^{\infty}(\alpha\beta)^{l}
\log|(\Phi_{2}\Phi_{1})^{l}(x)-y|\qquad\mbox{for}~x\in\overline{\mathcal{B}}_{2}.
\end{align*}

(3) When $y\in\mathcal{B}_{2}$, $\mathcal{G}(x,y)$ equals
\begin{align*}
&{\frac{4}{(k_{1}+1)(k_{2}+1)}}
\sum_{l=0}^{\infty}(\alpha\beta)^{l}
\log|(\Phi_{1}\Phi_{2})^{l}(x)-y|\qquad\mbox{for}~x\in\overline{\mathcal{B}}_{1};\\
&{\frac 2 {k_{2}+1}}\sum_{l=0}^{\infty}(\alpha\beta)^{l}
\Big(\log|(\Phi_{1}\Phi_{2})^{l}(x)-y|
-\alpha\log|(\Phi_{1}\Phi_{2})^{l}\Phi_{1}(x)-y|\Big)\\
&\quad\hspace{7.2cm}\mbox{for}~x\in\mathcal{B}_{0};\\
&{\frac 1 {k_2}}\big(\log|x-y|+\beta\log|\Phi_{2}(x)-y|\big)
-{\frac{4\alpha}{(k_{2}+1)^2}}\sum_{l=0}^{\infty}(\alpha\beta)^{l}
\log|(\Phi_{1}\Phi_{2})^{l}\Phi_{1}(x)-y|\\
&\quad\hspace{7.2cm}\mbox{for}~x\in\overline{\mathcal{B}}_{2}
\setminus\{(1+\varepsilon/2,0)\}.
\end{align*}

\begin{remark}\label{rem1}
We note that in the above definition of $\mathcal{G}$, for the case $y\in\mathcal{B}_{1}$ (or $y\in\mathcal{B}_{2}$), the point $(1+\varepsilon/2,0)$ (or $(0,-1-\frac{\epsilon}{2})$, respectively) is excluded, because $\Phi_{1}$ (or $\Phi_{2}$) has a singularity at $(1+\varepsilon/2,0)$ (or $(0,-1-\frac{\epsilon}{2})$, respectively). All the other terms appearing in the summations are regular.
\end{remark}

\begin{lemma}\label{lem1}
Let $\mathcal{G}(x,y)$ be defined above. Then we have
\begin{align*}
\Delta_{x}\mathcal{G}(x,y)=&\delta(x-y)\quad\mbox{for}~y\in\mathcal{B}_{0}
~\mbox{and}~x\notin\partial(\mathcal{B}_{1}\cup\mathcal{B}_{2});\\
k_{1}\Delta_{x}\mathcal{G}(x,y)=&\delta(x-y)\quad\mbox{for}~y\in\mathcal{B}_{1}
~\mbox{and}~x\notin\partial(\mathcal{B}_{1}\cup\mathcal{B}_{2})\cup
\{(1+\varepsilon/2,0)\};\\
k_{2}\Delta_{x}\mathcal{G}(x,y)=&\delta(x-y)\quad\mbox{for}~y\in\mathcal{B}_{2}
~\mbox{and}~x\notin\partial(\mathcal{B}_{1}\cup\mathcal{B}_{2})\cup
\{(-1-\varepsilon/2,0)\}.
\end{align*}
Moreover, $\mathcal{G}(x,y)$ and $a(x)D_{\nu}\mathcal{G}(x,y)$ are continuous across $\partial\mathcal{B}_{1}\cup\partial\mathcal{B}_{2}$, where $\nu$ is the unit norm vector field of $\partial(\mathcal{B}_{1}\cup\mathcal{B}_{2})$
\end{lemma}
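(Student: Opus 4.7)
The proof has three parts: (i) convergence of the series, (ii) the Laplacian/delta identities on each open region, and (iii) the two transmission conditions across $\partial\mathcal{B}_1\cup\partial\mathcal{B}_2$.

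For (i), I would first observe that the composition $T := \Phi_1\Phi_2$ is the restriction to $\mathbb{R}^2$ of a holomorphic Möbius transformation: each inversion $\Phi_j$ has the complex form $z\mapsto\overline{G_j(z)}$ for a Möbius $G_j$ with real coefficients, and the two conjugations cancel in the composition. For $\varepsilon>0$ this $T$ is hyperbolic, with attracting fixed point lying in $\mathcal{B}_1$ and repelling fixed point in $\mathcal{B}_2$, so $(\Phi_1\Phi_2)^l(x)$ converges exponentially to the attracting point for any $x$ outside the excluded singular set; combined with $|\alpha\beta|<1$, this gives absolute and locally uniform convergence of every defining series, together with each term-by-term $x$-derivative. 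For (ii), recall that $\log|z-y|$ is harmonic for $z\neq y$ in two dimensions and inversions preserve harmonicity, so every term $\log|\Psi(x)-y|$ in which $\Psi$ is a composition of $\Phi_1,\Phi_2$ is harmonic in $x$ wherever $\Psi(x)\neq y$. Inspecting the nine region-pair cases shows that the only potentially nonharmonic term is a single bare $c\log|x-y|$, appearing exactly when $x$ and $y$ lie in the same piece with $c\in\{1,1/k_1,1/k_2\}$; this immediately reproduces the three stated delta identities.

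The transmission conditions in (iii) are the main step. I will carry out the computation on $\partial\mathcal{B}_1$ for $y\in\mathcal{B}_0$; the remaining five boundary/region combinations are analogous. Since $\Phi_1|_{\partial\mathcal{B}_1}=\mathrm{id}$ and $D\Phi_1|_{\partial\mathcal{B}_1}=I-2\nu\nu^\top$, at any $x\in\partial\mathcal{B}_1$ one has the value equalities
\[ (\Phi_2\Phi_1)^l(x)=(\Phi_2\Phi_1)^{l-1}\Phi_2(x), \qquad (\Phi_1\Phi_2)^l\Phi_1(x)=(\Phi_1\Phi_2)^l(x), \]
and because the innermost $\Phi_1$ turns a perturbation $t\nu$ into $-t\nu+O(t^2)$, the normal derivatives of the corresponding $\log$ terms pick up an additional minus sign compared to the simplified expressions. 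Substituting into $\mathcal{G}^+$ (from the $\mathcal{B}_0$ side) and $\mathcal{G}^-$ (from the $\mathcal{B}_1$ side) and collecting the resulting double geometric series, one finds $\mathcal{G}^+|_{\partial\mathcal{B}_1}=(1-\alpha)\Sigma$ and $\mathcal{G}^-|_{\partial\mathcal{B}_1}=\tfrac{2}{k_1+1}\Sigma$ for a common sum $\Sigma$, and analogously $D_\nu\mathcal{G}^+|_{\partial\mathcal{B}_1}=(1+\alpha)\widetilde\Sigma$, $D_\nu\mathcal{G}^-|_{\partial\mathcal{B}_1}=\tfrac{2}{k_1+1}\widetilde\Sigma$ for the corresponding derivative series $\widetilde\Sigma$. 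The identities $1-\alpha=2/(k_1+1)$ and $1+\alpha=2k_1/(k_1+1)$ then yield $\mathcal{G}^+=\mathcal{G}^-$ and $D_\nu\mathcal{G}^+=k_1 D_\nu\mathcal{G}^-$, which is precisely $a^+D_\nu\mathcal{G}^+=a^- D_\nu\mathcal{G}^-$.

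The main obstacle is the bookkeeping in step (iii). One must keep track of the four families $(\Phi_1\Phi_2)^l$, $(\Phi_2\Phi_1)^l$, $(\Phi_2\Phi_1)^l\Phi_2$, $(\Phi_1\Phi_2)^l\Phi_1$ consistently across both interfaces, handle the extra bare $\log|x-y|$ and its $\Phi_j$-image partner that arise when $y$ sits inside a disk (this is the source of the singular points excluded in Remark \ref{rem1}), and repeat the resummation for all three positions of $y$ on both circles, six verifications in total. Once the sign flip and the algebraic identities $1\pm\alpha=2\{1,k_1\}/(k_1+1)$ and $1\pm\beta=2\{1,k_2\}/(k_2+1)$ are in hand, all six cases reduce uniformly to the same geometric-series manipulation sketched above.
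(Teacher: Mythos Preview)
Your proposal is correct and follows essentially the same route as the paper: the harmonicity check in (ii) by tracking where the iterated images $\Psi(x)$ land, and the transmission verification in (iii) via $\Phi_1|_{\partial\mathcal{B}_1}=\mathrm{id}$, the sign flip $\partial_\nu(f\circ\Phi_1)=-\partial_\nu f$, index-shifting of the series, and the identities $1\mp\alpha=2\{1,k_1\}/(k_1+1)$, are exactly the computations the paper performs. The one addition you make is the explicit convergence argument (i) via the hyperbolic M\"obius structure of $\Phi_1\Phi_2$; the paper does not address convergence within this lemma's proof and instead establishes the needed contraction estimates separately in the derivative lemma of the following section.
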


\begin{proof}
We first consider the case when $y\in\mathcal{B}_{0}$. When $x\in\mathcal{B}_{1}$,
$(\Phi_{1}\Phi_{2})^{l}(x)\in\mathcal{B}_{1}$ and $(\Phi_{2}\Phi_{1})^{l}\Phi_{2}(x)\in\mathcal{B}_{2}$.
Therefore, 
$\mathcal{G}(x,y)$ is harmonic in $\mathcal{B}_{1}$. In the same way, we can show that $\mathcal{G}(x,y)$ is harmonic in $\mathcal{B}_{2}$ as well. When $x\in\mathcal{B}_{0}$, each term in the expression of $\mathcal{G}(x,y)$, with the exception of $\log|x-y|$, is harmonic in $\mathcal{B}_{0}$ by the same argument. Hence, when $x\in\mathcal{B}_{0}$, $\Delta_{x}\mathcal{G}(x,y)=\delta(x-y)$. Recall that, as we mention at the beginning of this section, we write $\Delta_{x}\log|x-y|=\delta(x-y)$.

It remains to verify the continuity of $\mathcal{G}(x,y)$ and $a(x)D_{\nu}\mathcal{G}(x,y)$ across the two circles $\partial \cB_1$ and $\partial \cB_2$. We only present the calculations corresponding to the case $\partial\mathcal{B}_{1}$ because the other case is similar. Using the simple fact that
\begin{equation}\label{phi1x=x}
\Phi_1(x)=x\quad\mbox{on}~~\partial \mathcal B_1,
\end{equation}
we have, for any $y\in\mathcal{B}_{0}$,
\begin{align*}
\mathcal{G}(&x,y)\Big|_{\partial\mathcal{B}_{1}}^{-}\\
=&\log|x-y|+\sum_{l=1}^{\infty}\Big[(\alpha\beta)^{l}
\Big(\log|(\Phi_{1}\Phi_{2})^{l}(x)-y|+\log|(\Phi_{2}\Phi_{1})^{l}(x)-y|\Big)\\
&-(\alpha\beta)^{l-1}\Big(\beta\log|(\Phi_{2}\Phi_{1})^{l-1}\Phi_{2}(x)-y|+\alpha\log|(\Phi_{1}\Phi_{2})^{l-1}\Phi_{1}(x)-y|\Big)\Big]\\
=&\log|x-y|+\sum_{l=1}^{\infty}\Big[(\alpha\beta)^{l}
\Big(\log|(\Phi_{1}\Phi_{2})^{l}(x)-y|+\log|(\Phi_{2}\Phi_{1})^{l-1}\Phi_{2}(x)-y|\Big)\\
&-(\alpha\beta)^{l-1}\Big(\beta\log|(\Phi_{2}\Phi_{1})^{l-1}\Phi_{2}(x)-y|
+\alpha\log|(\Phi_{1}\Phi_{2})^{l-1}(x)-y|\Big)\Big]\\
=&(1-\alpha)\sum_{l=0}^{\infty}(\alpha\beta)^{l}
\Big(\log|(\Phi_{1}\Phi_{2})^{l}(x)-y|-\beta\log|(\Phi_{2}\Phi_{1})^{l}\Phi_{2}(x)-y|\Big)\\
=&\mathcal{G}(x,y)\Big|_{\partial\mathcal{B}_{1}}^{+},
\end{align*}
where we used $1-\alpha=2/(k_{1}+1)$ in the last equality.

Next, we check the continuity of $a(x)D_{\nu}\mathcal{G}(x,y)$ on $\partial\mathcal{B}_{1}$. Notice that  for any differentiable function $f$,
\begin{equation}\label{normalB1}
\partial_\nu \big(f(\Phi_1(x))\big)=-\partial_\nu f(x) \quad\mbox{on}~~\partial \mathcal{B}_{1}.
\end{equation}
Therefore, for any $y\in\mathcal{B}_{0}$,
\begin{align*}
a(&x)D_{\nu}\mathcal{G}(x,y)\Big|^{-}_{\partial\mathcal{B}_{1}}
=D_{\nu}\mathcal{G}(x,y)\Big|^{-}_{\partial\mathcal{B}_{1}}\\
=&D_{\nu}\log|x-y|+\sum_{l=1}^{\infty}\Big[(\alpha\beta)^{l}
\Big(D_{\nu}\log|(\Phi_{1}\Phi_{2})^{l}(x)-y|+D_{\nu}\log|(\Phi_{2}\Phi_{1})^{l}(x)-y|\Big)\\
&-(\alpha\beta)^{l-1}\Big(\beta\,D_{\nu}\log|(\Phi_{2}\Phi_{1})^{l-1}\Phi_{2}(x)-y|
+\alpha\,D_{\nu}\log|(\Phi_{1}\Phi_{2})^{l-1}\Phi_{1}(x)-y|\Big)\Big]\\
=&D_{\nu}\log|x-y|+\sum_{l=1}^{\infty}\Big[(\alpha\beta)^{l}
\Big(D_{\nu}\log|(\Phi_{1}\Phi_{2})^{l}(x)-y|\\
&-D_{\nu}\log|(\Phi_{2}\Phi_{1})^{l-1}\Phi_{2}(x)-y|\Big)
-(\alpha\beta)^{l-1}\Big(\beta\,D_{\nu}\log|(\Phi_{2}\Phi_{1})^{l-1}\Phi_{2}(x)
-y|\\
&-\alpha\,D_{\nu}\log|(\Phi_{1}\Phi_{2})^{l-1}(x)-y|\Big)\Big]\\
=&(1+\alpha)\sum_{l=0}^{\infty}(\alpha\beta)^{l}
\Big(D_\nu\log|(\Phi_{1}\Phi_{2})^{l}(x)-y|-\beta D_\nu\log|(\Phi_{2}\Phi_{1})^{l}\Phi_{2}(x)-y|\Big)\\
=&k_{1}D_{\nu}\mathcal{G}(x,y)\Big|^{+}_{\partial\mathcal{B}_{1}}
=a(x)D_{\nu}\mathcal{G}(x,y)\Big|^{+}_{\partial\mathcal{B}_{1}},
\end{align*}
where we used $1+\alpha=2k_1/(k_{1}+1)$ in the last but one equality.

For the case when $y\in\mathcal{B}_{1}$, the singularity appears in $\mathcal{B}_{1}$. If $x\in\mathcal{B}_{0}$, then
$(\Phi_{2}\Phi_{1})^{l}(x)\in\mathcal{B}_{2}$ and $(\Phi_{2}\Phi_{1})^{l}\Phi_{2}(x)\in\mathcal{B}_{2}$. Therefore, $\mathcal{G}(x,y)$ is harmonic in $\mathcal{B}_{0}$. In the same way, $\mathcal{G}(x,y)$ is harmonic in $\mathcal{B}_{2}$ as well. When $x\in\mathcal{B}_{1}\setminus\{(1+\varepsilon/2,0)\}$, each term in the expression of $\mathcal{G}(x,y)$, with the exception of the term $\frac{1}{k_{1}}\log|x-y|$, is harmonic in $\mathcal{B}_{1}\setminus\{(1+\varepsilon/2,0)\}$ because $\Phi_1(x)\notin \cB_1$ and $(\Phi_2\Phi_1)^l\Phi_2(x)\in \cB_2$. Hence, when $x\in\mathcal{B}_{1}\setminus\{(1+\varepsilon/2,0)\}$, $k_{1}\Delta_{x}\mathcal{G}(x,y)=\delta(x-y)$.

Now we verify the continuity of $\mathcal{G}(x,y)$ and $a(x)D_{\nu}\mathcal{G}(x,y)$ across the circle $\partial \cB_1$. The proof of the continuity across $\partial \cB_2$ is similar. For $x\in\partial\mathcal{B}_{1}$, using \eqref{phi1x=x} again, we have, for any $y\in\mathcal{B}_{1}$,
\begin{align*}
\mathcal{G}(&x,y)\Big|_{\partial\mathcal{B}_{1}}^{-}\\
=&{\frac 2 {k_{1}+1}}\sum_{l=0}^{\infty}(\alpha\beta)^{l}
\Big(\log|(\Phi_{2}\Phi_{1})^{l}(x)-y|
-\beta\log|(\Phi_{2}\Phi_{1})^{l}\Phi_{2}(x)-y|\Big)\\
=&\frac 2 {k_{1}+1}(\log|x-y|-\beta\log|\Phi_{2}(x)-y|)\\
&+\frac 2 {k_{1}+1}\sum_{l=1}^{\infty}(\alpha\beta)^{l}
\Big(\log|(\Phi_{2}\Phi_{1})^{l-1}\Phi_{2}(x)-y|
-\beta\log|(\Phi_{2}\Phi_{1})^{l}\Phi_{2}(x)-y|\Big)\\
=&\frac{1+\alpha}{k_{1}}\log|x-y|-\frac{2\beta(1-\alpha)}
{(k_{1}+1)}\sum_{l=0}^{\infty}(\alpha\beta)^{l}
\log|(\Phi_{2}\Phi_{1})^{l}\Phi_{2}(x)-y|\\
=&{\frac 1 {k_1}}\big(\log|x-y|+\alpha\log|\Phi_{1}(x)-y|\big)
-{\frac{4\beta}{(k_{1}+1)^2}}\sum_{l=0}^{\infty}(\alpha\beta)^{l}
\log|(\Phi_{2}\Phi_{1})^{l}\Phi_{2}(x)-y|\\
=&\mathcal{G}(x,y)\Big|_{\partial\mathcal{B}_{1}}^{+}.
\end{align*}

Next, we check the continuity of $a(x)D_{\nu}\mathcal{G}(x,y)$ on $\partial\mathcal{B}_{1}$. In view of \eqref{normalB1} again, for any $y\in\mathcal{B}_{1}$,
\begin{align*}
a(&x)D_{\nu}\mathcal{G}(x,y)\Big|^{-}_{\partial\mathcal{B}_{1}}=D_{\nu}\mathcal{G}(x,y)\Big|^{-}_{\partial\mathcal{B}_{1}}\\
=&{\frac 2 {k_{1}+1}}\sum_{l=0}^{\infty}(\alpha\beta)^{l}
\Big(D_{\nu}\log|(\Phi_{2}\Phi_{1})^{l}(x)-y|
-\beta\,D_{\nu}\log|(\Phi_{2}\Phi_{1})^{l}\Phi_{2}(x)-y|\Big)\\
=&{\frac 2 {k_{1}+1}}
\Big(D_{\nu}\log|x-y|
-\beta\,D_{\nu}\log|\Phi_{2}(x)-y|\Big)\\
&-{\frac 2 {k_{1}+1}}\sum_{l=1}^{\infty}(\alpha\beta)^{l}
\Big(D_{\nu}\log|(\Phi_{2}\Phi_{1})^{l-1}\Phi_{2}(x)-y|
+\beta\,D_{\nu}\log|(\Phi_{2}\Phi_{1})^{l}\Phi_{2}(x)-y|\Big)\\
=&(1-\alpha)D_{\nu}\log|x-y|-\frac 2 {k_{1}+1}\beta\,D_{\nu}\log|\Phi_{2}(x)-y|\\
&-\frac 2 {k_{1}+1}\sum_{l=1}^{\infty}(\alpha\beta)^{l}
\Big(D_{\nu}\log|(\Phi_{2}\Phi_{1})^{l-1}\Phi_{2}(x)-y|
+\beta\,D_{\nu}\log|(\Phi_{2}\Phi_{1})^{l}\Phi_{2}(x)-y|\Big)\\
=&\big(D_{\nu}\log|x-y|+\alpha\,D_{\nu}\log|\Phi_{1}(x)-y|\big)\\
&-{\frac{4\beta\,k_{1}}{(k_{1}+1)^2}}\sum_{l=0}^{\infty}(\alpha\beta)^{l}
D_{\nu}\log|(\Phi_{2}\Phi_{1})^{l}\Phi_{2}(x)-y|\\
=&k_{1}D_{\nu}\mathcal{G}(x,y)\Big|^{+}_{\partial\mathcal{B}_{1}}=a(x)D_{\nu}\mathcal{G}(x,y)\Big|^{+}_{\partial\mathcal{B}_{1}},
\end{align*}
where we used $1+\alpha=2k_1/(k_{1}+1)$ in the last but one equality.

The case when $y\in\mathcal{B}_{2}$ is similar, and thus omitted. The proof is finished.
\end{proof}

With Lemma \ref{lem1}, we are ready to construct a Green's function $G(x,y)$ of the operator $L$. Define
$$
G(x,y)=
\begin{cases}
\mathcal{G}(x,y)\quad&\text{for}\,y\in\mathcal{B}_{0},\\
\mathcal{G}(x,y)+\frac{\alpha}{1-\alpha}\mathcal{G}(x,(1+\varepsilon/2,0))
\quad&\text{for}\,y\in\mathcal{B}_{1},\\
\mathcal{G}(x,y)+\frac{\beta}{1-\beta}\mathcal{G}(x,(-1-\varepsilon/2,0))
\quad&\text{for}\,y\in\mathcal{B}_{2}.
\end{cases}
$$
When defining $\mathcal{G}$, $(1+\varepsilon/2,0)$ and $(-1-\varepsilon/2,0)$ are removed, since $\Phi_{1}$ and $\Phi_{2}$ have singularity at these points. Nevertheless, in the following proposition we prove that for fixed $y$, $G(x,y)$ is well-defined in $\mathbb{R}^{2}\setminus\{y\}$. In particular,
$$
\lim_{x\rightarrow(1+\varepsilon/2,0)}G(x,y)
\quad\big(\mbox{or}\quad\lim_{x\rightarrow(-1-\varepsilon/2,0)}G(x,y)\big)
$$
exists when $y\neq (1+\varepsilon/2,0)$ (or $y\neq (-1-\varepsilon/2,0)$, respectively).

\begin{prop}\label{prop1}
The function $G(x,y)$ defined above is a Green's function of $L$.
\end{prop}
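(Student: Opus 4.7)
The plan is to reduce Proposition~\ref{prop1} to Lemma~\ref{lem1} by cancelling the parasitic logarithmic singularities that $\mathcal{G}(\cdot, y)$ carries at the inversion centers $p_1 := (1 + \varepsilon/2, 0)$ and $p_2 := (-1 - \varepsilon/2, 0)$. The case $y \in \mathcal{B}_0$ is immediate: $G = \mathcal{G}$, and Lemma~\ref{lem1} supplies both the bulk identity $L_x G(x, y) = \delta(x - y)$ on each of $\mathcal{B}_0, \mathcal{B}_1, \mathcal{B}_2$ and the transmission conditions on $\partial \mathcal{B}_1 \cup \partial \mathcal{B}_2$, so $L_x G(x, y) = \delta(x - y)$ holds as a distribution on $\mathbb{R}^2$. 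Note that for $y \in \mathcal{B}_0$ the iterates of $\Phi_1, \Phi_2$ appearing in $\mathcal{G}(\cdot, y)$ never reach $p_1$ or $p_2$, so $G$ is automatically smooth there.

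For the main case $y \in \mathcal{B}_1$ (the case $y \in \mathcal{B}_2$ is entirely symmetric), I would first extract the singular behavior of $\mathcal{G}(\cdot, y)$ as $x \to p_1$. The identity $\Phi_1(x) - p_1 = (x - p_1)/|x - p_1|^2$ yields $\log|\Phi_1(x) - z| = -\log|x - p_1| + O(1)$ for any fixed $z \neq p_1$, and only the term $(\alpha/k_1)\log|\Phi_1(x) - y|$ in the expression for $\mathcal{G}$ on $\mathcal{B}_1$ is affected, since the tail $\sum_l (\alpha\beta)^l \log|(\Phi_2 \Phi_1)^l \Phi_2(x) - y|$ has each iterate in $\mathcal{B}_2$ (by $\Phi_2(\mathcal{B}_1) \subset \mathcal{B}_2$) and is therefore smooth at $x = p_1$. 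This gives
\begin{equation*}
\mathcal{G}(x, y) = -\frac{\alpha}{k_1} \log|x - p_1| + O(1) \quad \text{as } x \to p_1.
\end{equation*}
Next I would evaluate the same formula with $y$ replaced by $p_1$; the tail still converges by $|\alpha\beta| < 1$, and the combination $\log|x - p_1| + \alpha \log|\Phi_1(x) - p_1|$ collapses exactly to $(1 - \alpha) \log|x - p_1|$ via the identity $|\Phi_1(x) - p_1| = 1/|x - p_1|$, yielding
\begin{equation*}
\mathcal{G}(x, p_1) = \frac{1 - \alpha}{k_1}\log|x - p_1| + O(1) \quad \text{as } x \to p_1.
\end{equation*}
The coefficient $\alpha/(1 - \alpha)$ in the definition of $G$ is exactly what is needed for these two singular contributions to cancel, so $G(\cdot, y)$ extends continuously across $p_1$ and is well-defined on $\mathbb{R}^2 \setminus \{y\}$.

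The same expansions upgrade immediately to distributional identities on $\mathcal{B}_1$, namely $k_1 \Delta_x \mathcal{G}(x, y) = \delta(x - y) - \alpha \delta(x - p_1)$ and $k_1 \Delta_x \mathcal{G}(x, p_1) = (1 - \alpha)\delta(x - p_1)$, so the linear combination defining $G$ yields $k_1 \Delta_x G(x, y) = \delta(x - y)$ inside $\mathcal{B}_1$. On $\mathcal{B}_0$ and $\mathcal{B}_2$ both $\mathcal{G}(\cdot, y)$ and $\mathcal{G}(\cdot, p_1)$ are harmonic by Lemma~\ref{lem1}, so $L_x G \equiv 0$ there, which is consistent with the source being located at $y \in \mathcal{B}_1$. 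The transmission conditions for $G$ across $\partial \mathcal{B}_1 \cup \partial \mathcal{B}_2$ follow by linearity from the corresponding conditions for $\mathcal{G}(\cdot, y)$ and $\mathcal{G}(\cdot, p_1)$, both supplied by Lemma~\ref{lem1}. Combining these pieces gives $L_x G(x, y) = \delta(x - y)$ as a distribution on $\mathbb{R}^2$, which is the Green's function property. The main obstacle is the distributional bookkeeping at $p_1$ (and $p_2$): one must confirm that the parasitic $\delta(x - p_1)$ in $k_1 \Delta_x \mathcal{G}(\cdot, y)$ arises only from the single factor $\log|\Phi_1(x) - y|$ and not from any iterated term, which reduces to the inclusions $\Phi_1(\mathcal{B}_0 \cup \mathcal{B}_2) \subset \mathcal{B}_1$ and $\Phi_2(\mathcal{B}_0 \cup \mathcal{B}_1) \subset \mathcal{B}_2$ keeping the iterates away from the centers, together with $|\alpha\beta| < 1$ for uniform convergence of the tails.
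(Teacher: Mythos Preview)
Your proposal is correct and follows essentially the same approach as the paper: both arguments isolate the parasitic logarithmic singularity of $\mathcal{G}(\cdot,y)$ at $p_1$ coming from the single term $(\alpha/k_1)\log|\Phi_1(x)-y|$, compute the matching singularity of $\mathcal{G}(\cdot,p_1)$ via $|\Phi_1(x)-p_1|=1/|x-p_1|$, and observe that the coefficient $\alpha/(1-\alpha)$ forces cancellation, after which linearity of the transmission conditions from Lemma~\ref{lem1} finishes the job. The only cosmetic difference is that the paper phrases the conclusion as ``$p_1$ is a removable singularity of $G(\cdot,y)$'' and treats the borderline case $y=p_1$ separately (noting $G(\cdot,p_1)=\tfrac{1}{1-\alpha}\mathcal{G}(\cdot,p_1)$), whereas you package the same computation into the distributional identities $k_1\Delta_x\mathcal{G}(x,y)=\delta(x-y)-\alpha\delta(x-p_1)$ and $k_1\Delta_x\mathcal{G}(x,p_1)=(1-\alpha)\delta(x-p_1)$, which automatically covers $y=p_1$ as well.
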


\begin{proof}
From Lemma \ref{lem1}, $\mathcal{G}(x,y)$ satisfies the compatibility conditions. Thus by linearity, $G(x,y)$ satisfies the compatibility conditions as well. It remains to prove
$$
a(x)\Delta_{x}G(x,y)=\delta(x-y)\quad\mbox{for}
~~x\notin\partial\mathcal{B}_{1}\cup\partial\mathcal{B}_{2}.
$$

For $y\in\mathcal{B}_{0}$, this is proved in Lemma \ref{lem1}. It remains to treat the case when $y\in\mathcal{B}_{1}\cup\mathcal{B}_{2}$. We only consider the case when $y\in\mathcal{B}_{1}$ because the case when $y\in\mathcal{B}_{2}$ is similar. As we mentioned in Remark \ref{rem1}, $\Phi_{1}$ has a singularity at $(1+\varepsilon/2,0)$. By the definition of $\Phi_{1}$, \eqref{Phi1real}, we have, for fixed $y\neq(1+\varepsilon/2,0)$, when $x$ is near $(1+\varepsilon/2,0)$,
$$
\log|\Phi_{1}(x)-y|\sim-\log|x-(1+\varepsilon/2,0)|,
$$
which implies that
$$
k_1\mathcal{G}(x,y)\sim-\alpha\log|x-(1+\varepsilon/2,0)|.
$$
Moreover, the singular part in $k_1\mathcal{G}(x,(1+\varepsilon/2,0))$ behaves like
$$
(1-\alpha)\log|x-(1+\varepsilon/2,0)|.
$$
Therefore, $G(x,y)$ is bounded around $(1+\varepsilon/2,0)$. This yields that $(1+\varepsilon/2,0)$ is a removable singularity of $G(\cdot,y)$ for $y\neq(1+\varepsilon/2,0)$, so that
$$
k_1\Delta_{x}G(x,y)=\delta(x-y).
$$

When $y=(1+\varepsilon/2,0)$, it suffices to consider $x\in\mathcal{B}_{1}$. Note that
$$
\log|\Phi_1(x)-y|=-\log|x-y|\quad
\text{and}\quad
G(x,y)=\frac 1 {1-\alpha}\cdot\cG(x,y).
$$
Thus, by the definition of $\cG(x,y)$,
$k_1G(x,y)-\log|x-y|$
is harmonic in $\mathcal{B}_{1}$.
Therefore, $G(x,y)$ is well defined and
$$
k_1\Delta_{x}G(x,y)=\delta(x-y).
$$
The proof is finished.
\end{proof}

\section{Derivative estimates}\label{sec_thm12}

In order to establish the derivative estimates for solutions of \eqref{mainequ}, we first derive derivative estimates of the maps $\Phi_{1}\Phi_{2}$, $\Phi_{2}\Phi_{1}$, and their compositions.

\begin{lemma}\label{lem_Phi_bound}
For any integers $l\ge 0$ and $m\ge 1$,
\begin{equation}
                        \label{Phi21bound}
|D^{m}(\Phi_{2}\Phi_{1})^{l}(x)|\le \frac{Cl^{m-1}}{(1+2\sqrt{\varepsilon})^{2l}}\quad\text{in}\,\,\mathcal{B}_{2},
\end{equation}
and
\begin{equation*}
|D^{m}(\Phi_{1}\Phi_{2})^{l}(x)|\le \frac{Cl^{m-1}}{(1+2\sqrt{\varepsilon})^{2l}}\quad\text{in}\,\,\mathcal{B}_{1},
\end{equation*}
where $C$ depends only on $m$.
\end{lemma}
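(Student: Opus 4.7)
The plan is to exploit the fact that $\sigma := \Phi_2 \Phi_1$, as a composition of two circular inversions, is a holomorphic M\"obius transformation of $\mathbb{C} \simeq \mathbb{R}^2$; a direct calculation gives
\[
\sigma(z) = c_2 + \frac{z - c_1}{d(z - c_1) + 1}, \qquad c_1 := 1 + \varepsilon/2 = -c_2, \quad d := 2c_1.
\]
The two real fixed points of $\sigma$ are $p_\pm = \pm\sqrt{\varepsilon + \varepsilon^2/4}$, which satisfy the algebraic identity $c_1^2 - p_+^2 = 1$. Using this identity, $d(p_+ - c_1) + 1 = -(c_1 - p_+)/(c_1 + p_+)$, so $\sigma'(p_+) = (c_1 + p_+)^4 =: \lambda$. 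The fundamental quantitative inequality
\[
(c_1 + p_+)^2 = 1 + 2 p_+(c_1 + p_+) \ge 1 + 2\sqrt{\varepsilon}
\]
(which holds because $c_1 + p_+ \ge 1$ and $p_+ \ge \sqrt{\varepsilon}$) then gives $\lambda \ge (1 + 2\sqrt{\varepsilon})^2$, and is the source of the exponential decay in the lemma.

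Conjugating by the M\"obius map $T(z) := (z - p_+)/(z - p_-)$, which sends $p_+ \mapsto 0$ and $p_- \mapsto \infty$, produces the identity $T \sigma = \lambda \cdot T$, so that $T \sigma^l = \lambda^l T$ for every $l$. Unwinding this as a linear fractional transformation yields
\[
\sigma^l(z) = \frac{(p_- \lambda^l - p_+)\, z + p_+ p_-(1 - \lambda^l)}{(\lambda^l - 1)\, z + (p_- - \lambda^l p_+)},
\]
which has determinant $4 \lambda^l p_+^2$ and pole at $P := p_+(\lambda^l + 1)/(\lambda^l - 1) > p_+ > 0$. The formula for the $m$-th derivative of a linear fractional transformation then gives
\[
\bigl|(\sigma^l)^{(m)}(z)\bigr| = \frac{4\, m!\, \lambda^l\, p_+^2}{(\lambda^l - 1)^2\, |z - P|^{m+1}}.
\]
For $z \in \mathcal{B}_2$ one has $\Re z \le -\varepsilon/2 < 0 < P$, so $|z - P| \ge P$, and substituting this gives
\[
\bigl|(\sigma^l)^{(m)}(z)\bigr| \le \frac{4\, m!\, \lambda^l\, (\lambda^l - 1)^{m-1}}{p_+^{m-1}\, (\lambda^l + 1)^{m+1}}.
\]

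It remains to compare $\lambda^l - 1$ with $p_+$. Factoring,
\[
\lambda - 1 = 2 p_+ (c_1 + p_+)\bigl((c_1 + p_+)^2 + 1\bigr) \le C p_+
\]
for $\varepsilon < 1/2$ (using $c_1 + p_+ \le 2$), so $\ln \lambda \le \lambda - 1 \le C p_+$, and consequently $\lambda^l - 1 \le l \lambda^l \ln \lambda \le C l p_+ \lambda^l$. Combining with $(\lambda^l + 1)^{m+1} \ge \lambda^{l(m+1)}$ and $\lambda \ge (1 + 2\sqrt{\varepsilon})^2$ yields
\[
\bigl|(\sigma^l)^{(m)}(z)\bigr| \le \frac{C_m\, l^{m-1}\, \lambda^{lm}}{\lambda^{l(m+1)}} = \frac{C_m\, l^{m-1}}{\lambda^l} \le \frac{C_m\, l^{m-1}}{(1 + 2\sqrt{\varepsilon})^{2l}}.
\]
Since $\sigma^l$ is holomorphic, every real partial derivative of order $m$ has modulus at most $|(\sigma^l)^{(m)}|$ (up to a dimensional constant), delivering the claimed bound for $\Phi_2 \Phi_1$. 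The analogous estimate for $\Phi_1 \Phi_2$ on $\mathcal{B}_1$ follows from the mirror symmetry $z \mapsto -z$ that interchanges $\mathcal{B}_1$ and $\mathcal{B}_2$. The principal technical point is the tight lower bound $|z - P| \ge P$: the cruder bound $|z - P| \ge p_+ + \varepsilon/2$ would cost a spurious factor $\varepsilon^{-(m-1)/2}$ that dominates $l^{m-1}$ in the regime $l \ll 1/\sqrt{\varepsilon}$.
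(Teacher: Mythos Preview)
Your proof is correct and follows essentially the same approach as the paper: both recognize $\Phi_2\Phi_1$ as a M\"obius transformation, locate its two real fixed points, obtain a closed formula for the $l$-th iterate, differentiate, and control the denominator via the geometry of $\mathcal{B}_2$. The only cosmetic difference is that the paper first makes an affine change of variable reducing $\sigma$ to $z\mapsto -1/z+c$ and then iterates $1/(\sigma^l-\lambda_2)$, whereas you conjugate directly by the cross-ratio sending the fixed points to $0,\infty$; your multiplier $\lambda=(c_1+p_+)^4$ is exactly the paper's $\lambda_2^{2}$, so the decay rates agree.
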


\begin{proof}
We identify a point $x=(x_1,x_2)\in \bR^2$ with a complex number $z=x_1+i x_2\in\mathbb{C}$. For convenience, here we take $a=1+\varepsilon/2$.
With respect to the complex variable,
\begin{equation}\label{def_Phi_complex}
\Phi_1(z)=\frac{a\bar z-(a^2-1)}{\bar z-a}\quad\text{and}\quad
\Phi_2(z)=\frac{-a\bar z-(a^2-1)}{\bar z+a}.
\end{equation}
Thus,
$$
(\Phi_{2}\Phi_{1})(z):=\Phi_{2}\circ\Phi_{1}(z)=\frac{-(2a^2-1)z+2a(a^2-1)}{2az-(2a^2-1)}.
$$
Using a translation and dilation of coordinates
\begin{equation*}
2az-(2a^2-1)\to z,\quad\mbox{ and}\quad 2a(\Phi_{2}\Phi_{1})(z)-(2a^2-1)\to (\Phi_{2}\Phi_{1})(z),
\end{equation*}
we obtain
$$
(\Phi_{2}\Phi_{1})(z):=-1/z-2(2a^2-1).
$$
To find an expression of $(\Phi_{2}\Phi_{1})^l$, we consider the fixed points of $\Phi_{2}\Phi_{1}$.
Notice that $\Phi_{2}\Phi_{1}$ has two fixed points, the one in $\mathcal{B}_{1}$ given by
$$
\lambda_{1}:=-(2a^2-1)+2a\sqrt{a^2-1}\sim -1+2\sqrt{\varepsilon},
$$
and the one in $\mathcal{B}_{2}$ given by
$$
\lambda_{2}:=-(2a^2-1)-2a\sqrt{a^2-1}\sim -1-2\sqrt{\varepsilon}.
$$

Clearly, for $z\in\mathcal{B}_{2}$,
$$
(\Phi_{2}\Phi_{1})(z)-\lambda_{2}=-1/z-2(2a^2-1)-\lambda_{2}
=-1/z+1/\lambda_{2}=\frac{z-\lambda_{2}}{z\lambda_{2}},
$$
which implies that for any $z\neq \lambda_{2}$,
$$
\frac 1 {(\Phi_{2}\Phi_{1})(z)-\lambda_{2}}=\frac{\lambda_{2}^2}{z-\lambda_{2}}+\lambda_{2},
$$
and thus
$$
\frac 1 {(\Phi_{2}\Phi_{1})(z)-\lambda_{2}}-\frac{\lambda_{2}}{1-\lambda_{2}^2}=
\lambda_{2}^2\Big(\frac{1}{z-\lambda_{2}}-\frac{\lambda_{2}}{1-\lambda_{2}^2}\Big).
$$
By iteration, we have for any $z\neq \lambda_{2}$ and $l\ge 0$,
$$
\frac 1 {(\Phi_{2}\Phi_{1})^{l}(z)-\lambda_{2}}=
\lambda_{2}^{2l}\Big(\frac{1}{z-\lambda_{2}}-\frac{\lambda_{2}}{1-\lambda_{2}^2}\Big)
+\frac{\lambda_{2}}{1-\lambda_{2}^2}.
$$
Hence
\begin{align}
(\Phi_{2}\Phi_{1})^{l}(z)&=\lambda_{2}+
\Big(\lambda_{2}^{2l}\big(\frac{1}{z-\lambda_{2}}-\frac{\lambda_{2}}{1-\lambda_{2}^2}\big)
+\frac{\lambda_{2}}{1-\lambda_{2}^2}\Big)^{-1}\nonumber\\
&=\lambda_{2}+\frac{\lambda_{2}^2-1}{\lambda_{2}^{2l+1}}\cdot \frac{z-\lambda_{2}}{z(1-\lambda_{2}^{-2l})
-(\lambda_{2}^{-1}-\lambda_{2}^{-2l+1})}\nonumber\\
                            \label{eq11.37}
&=\lambda_{2}+\frac{\lambda_{2}^2-1}{\lambda_{2}^{2l+1}}\cdot \frac{1}{1-\lambda_{2}^{-2l}}\Big(1+\frac{\lambda_{2}^{-1}-\lambda_{2}}
{z(1-\lambda_{2}^{-2l})-(\lambda_{2}^{-1}-\lambda_{2}^{-2l+1})}\Big).
\end{align}
Note that the identity above also holds when $z=\lambda_2$.

Next, we differentiate \eqref{eq11.37} with respect to $z$. For $m=1$,
\begin{align}\label{firstderivative}
D(\Phi_{2}\Phi_{1})^{l}(z)&=\frac{(\lambda_{2}-1/\lambda_2)^2}
{\lambda_{2}^{2l}}\cdot \Big(z(1-\lambda_{2}^{-2l})-(\lambda_2^{-1}
-\lambda_{2}^{-2l+1})\Big)^{-2}\nonumber\\
&=\frac{(\lambda_{2}-1/\lambda_2)^2}{\lambda_{2}^{2l}}\cdot
\Big((z-\lambda_2^{-1})(1-\lambda_{2}^{-2l})
+(\lambda_{2}-\lambda_2^{-1})\lambda_{2}^{-2l}\Big)^{-2}.
\end{align}
Since
$\lambda_{2}-\lambda_{2}^{-1}\sim -\sqrt\varepsilon$ and, for $z\in \mathcal{B}_{2}$ (in the new coordinates),
$$
\mathrm{Re}~ (z-\lambda_2^{-1})\le -2a(a-1)-(2a^2-1)-\lambda_2^{-1}\le -1-\lambda_2^{-1}\lesssim -\sqrt\varepsilon,
$$
it follows that
\begin{equation}\label{denominator}
\Big|(z-\lambda_2^{-1})(1-\lambda_{2}^{-2l})
+(\lambda_{2}-\lambda_2^{-1})\lambda_{2}^{-2l}\Big|\gtrsim \sqrt\varepsilon.
\end{equation}
Thus, we obtain for $l\geq0$,
$$
|D(\Phi_{2}\Phi_{1})^{l}(z)|\le \frac{C}{\lambda_{2}^{2l}}\le \frac{C}{(1+2\sqrt{\varepsilon})^{2l}}.
$$

For higher-order derivatives, from \eqref{firstderivative}, we have for  $m\geq2$,
\begin{align*}
&D^{m}(\Phi_{2}\Phi_{1})^{l}(z)\\
&=\frac{(\lambda_{2}-1/\lambda_2)^2}
{\lambda_{2}^{2l}}(-1)^{m-1}m!(1-\lambda_{2}^{-2l})^{m-1} \Big(z(1-\lambda_{2}^{-2l})-(\lambda_2^{-1}-\lambda_{2}^{-2l+1})\Big)^{-(m+1)}.
\end{align*}
For $z\in\mathcal{B}_{2}$, using \eqref{denominator} and the simple inequality
$$
0\le 1-\lambda_2^{-2l}\le Cl\sqrt\varepsilon,
$$
we get
\begin{equation}\label{Phi21m}
D^{m}(\Phi_{2}\Phi_{1})^{l}(z)\le \frac{Cl^{m-1}}{\lambda_{2}^{2l}}\le \frac{Cl^{m-1}}{(1+2\sqrt{\varepsilon})^{2l}}.
\end{equation}
Thus, \eqref{Phi21bound} is proved.

Similarly, write
$$(\Phi_{1}\Phi_{2})(z):=\Phi_1\circ\Phi_2(z)=\frac{(2a^2-1)z+2a(a^2-1)}{2az+2a^2-1}.
$$
By a translation and dilation of coordinates
$$2az+2a^2-1\to z\quad\mbox{ and}\quad 2a(\Phi_{1}\Phi_{2})(z)+2a^2-1\to (\Phi_{1}\Phi_{2})(z),$$ we obtain
$$
(\Phi_{1}\Phi_{2})(z):=-1/z+2(2a^2-1).
$$
The map $\Phi_{1}\Phi_{2}$ also has two fixed points, the one in $\mathcal{B}_{1}$ given by
$$
\tilde{\lambda}_{1}:=2a^2-1+2a\sqrt{a^2-1}\sim 1+2\sqrt{\varepsilon},
$$
and the one in $\mathcal{B}_{2}$ given by
$$\tilde{\lambda}_{2}:=2a^2-1-2a\sqrt{a^2-1}\sim 1-2\sqrt{\varepsilon}.$$

For $z\in\mathcal{B}_{1}$,
$$
(\Phi_{1}\Phi_{2})(z)-\tilde{\lambda}_{1}=-1/z+2(2a^2-1)-\tilde{\lambda}_{1}
=-1/z+1/\tilde{\lambda}_{1}=\frac{z-\tilde{\lambda}_{1}}{z\tilde{\lambda}_{1}}.
$$
In the same way,
\begin{align}\label{firstderivative12}
D(\Phi_{1}\Phi_{2})^{l}(z)
&=\frac{(\tilde{\lambda}_{1}-1/\tilde{\lambda}_{1})^2}
{\tilde{\lambda}_{1}^{2l}}\cdot \Big(z(1-\tilde{\lambda}_{1}^{-2l})-(\tilde{\lambda}_{1}^{-1}
-\tilde{\lambda}_{1}^{-2l+1})\Big)^{-2}\nonumber\\
&=\frac{(\tilde{\lambda}_{1}-1/\tilde{\lambda}_{1})^2}
{\tilde{\lambda}_{1}^{2l}}\cdot
\Big((z-\tilde{\lambda}_{1}^{-1})(1-\tilde{\lambda}_{1}^{-2l})
+(\tilde{\lambda}_{1}-\tilde{\lambda}_{1}^{-1})
\tilde{\lambda}_{1}^{-2l}\Big)^{-2}.
\end{align}
Since
$\tilde{\lambda}_{1}-\tilde{\lambda}_{1}^{-1}\sim \sqrt\varepsilon$ and for $z\in \mathcal{B}_1$ (in the new coordinates),
$$
\mathrm{Re}~ (z-\tilde{\lambda}_{1}^{-1})\ge 2a(a-1)+2a^2-1-\tilde{\lambda}_{1}^{-1}\ge 1-\tilde{\lambda}_{1}^{-1}\gtrsim \sqrt\varepsilon,
$$
we have
\begin{equation*}
\Big|(z-\tilde{\lambda}_{1}^{-1})(1-\tilde{\lambda}_{1}^{-2l})
+(\tilde{\lambda}_{1}-\tilde{\lambda}_{1}^{-1})
\tilde{\lambda}_{1}^{-2l}\Big|\gtrsim \sqrt\varepsilon.
\end{equation*}
Thus, for $l\geq 0$,
$$
|D(\Phi_{1}\Phi_{2})^{l}(z)|\le \frac{C}{\tilde{\lambda}_{1}^{2l}}\le \frac{C}{(1+2\sqrt{\varepsilon})^{2l}}.
$$
For higher-order derivatives, using
$$
0\le 1-\tilde{\lambda}_{1}^{-2l}\le Cl\sqrt\varepsilon,
$$
we obtain
\begin{equation*}
D^{m}(\Phi_{1}\Phi_{2})^{l}(z)\le \frac{Cl^{m-1}}{\tilde{\lambda}_{1}^{2l}}\le \frac{Cl^{m-1}}{(1+2\sqrt{\varepsilon})^{2l}}.
\end{equation*}
This completes the proof of the lemma.
\end{proof}

In the next lemma, we show a Schauder estimate for the elliptic equation \eqref{mainequ} with piecewise constant coefficient in the upper and lower half balls. We allow the coefficient to be partially degenerate and we give an explicit dependence of the constant in the estimate with respect to the coefficient.
\begin{lemma}\label{lem2}
Let $m\ge 1$, $\gamma\in (0,1)$, and $k\in (0,\infty)$ be constants. Suppose that $u\in W^1_2(B_1)$ is a weak solution to $D_i(a(x)D_i u)=D_if_i$, where $a(x)=k\chi_{B_1^+}+\chi_{B_1^-}$ and $f_i$ is $C^{m-1,\gamma}$ in $B_1^+$ and $B_1^-$. Then $u$ is $C^{m,\gamma}$ in $B_1^+$ and $B_1^-$, and
\begin{align*}
&\|u\|_{C^{m,\gamma}(B_{1/2}^+)}+\|u\|_{C^{m,\gamma}(B_{1/2}^-)}\\
&\le C \|u\|_{L_2(B_1)}+Ck^{-1}\|f\|_{C^{m-1,\gamma}(B_1^+)}
+C\|f\|_{C^{m-1,\gamma}(B_1^-)},
\end{align*}
where $C$ is a constant depending only on $d$ and $m$.
\end{lemma}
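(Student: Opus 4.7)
The plan is to reduce the transmission problem on $B_1$ to two decoupled boundary value problems on the upper half-ball $B_1^+$ by a reflection, apply classical Schauder estimates, and then recover $u$ by a linear combination that concentrates all $k$-dependence in one explicit arithmetic step. First I would write $u^\pm = u|_{B_1^\pm}$ and $f_i^\pm = f_i|_{B_1^\pm}$, reflect the lower piece via $u_*^-(x',x_d) := u^-(x',-x_d)$ on $B_1^+$, and let $\tilde f_i^-$ denote the analogous reflection of $f^-$ with the $d$-th component sign-flipped so that $D_i\tilde f_i^-(x',x_d) = (D_if_i^-)(x',-x_d)$. Then in $B_1^+$ one has $\Delta u^+ = k^{-1}D_if_i^+$ and $\Delta u_*^- = D_i\tilde f_i^-$, and the two transmission conditions $u|_+ = u|_-$ and $k\partial_d u|_+ - f_d^+|_0 = \partial_d u|_- - f_d^-|_0$ become
\[
u^+ - u_*^- = 0,\qquad k\partial_d u^+ + \partial_d u_*^- = f_d^+ - f_d^-\quad\text{on }\{x_d=0\}\cap B_1.
\]

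Next I would introduce the decoupling $p := u^+ - u_*^-$ and $r := k u^+ + u_*^-$ on $B_1^+$, which satisfy
\[
\Delta p = k^{-1}D_i f_i^+ - D_i\tilde f_i^- \text{ in } B_1^+,\quad p|_{x_d=0}=0,
\]
\[
\Delta r = D_i f_i^+ + D_i\tilde f_i^- \text{ in } B_1^+,\quad \partial_d r|_{x_d=0} = f_d^+ - f_d^-.
\]
So $p$ solves a homogeneous Dirichlet problem and $r$ solves a Neumann problem on the flat part of $\partial B_1^+$; crucially, neither boundary operator involves $k$. I would then apply the classical boundary Schauder estimates for the Laplacian on the half-ball (mixed interior/flat boundary), absorbing the $C^{m-1,\gamma}$ Neumann boundary data into the interior norms of $f^\pm$ via the trace inequality, to obtain
\begin{align*}
\|p\|_{C^{m,\gamma}(B_{1/2}^+)} &\le C\|p\|_{L_2(B_1^+)} + Ck^{-1}\|f\|_{C^{m-1,\gamma}(B_1^+)} + C\|f\|_{C^{m-1,\gamma}(B_1^-)},\\
\|r\|_{C^{m,\gamma}(B_{1/2}^+)} &\le C\|r\|_{L_2(B_1^+)} + C\|f\|_{C^{m-1,\gamma}(B_1^+)} + C\|f\|_{C^{m-1,\gamma}(B_1^-)},
\end{align*}
with $C$ depending only on $d,m,\gamma$, together with the trivial bounds $\|p\|_{L_2(B_1^+)}\le C\|u\|_{L_2(B_1)}$ and $\|r\|_{L_2(B_1^+)}\le C(k+1)\|u\|_{L_2(B_1)}$.

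Finally I would invert $u^+ = (r+p)/(k+1)$ and $u_*^- = (r - kp)/(k+1)$ and substitute the two bounds. The coefficient of $\|f\|_{C^{m-1,\gamma}(B_1^+)}$ in $\|u^+\|_{C^{m,\gamma}}$ works out to $(C + Ck^{-1})/(k+1) = Ck^{-1}$ \emph{exactly}, while the analogous coefficient in $\|u_*^-\|_{C^{m,\gamma}}$ is $2C/(k+1) \le 2Ck^{-1}$ (checked separately for $k\le 1$ and $k\ge 1$); the $\|u\|_{L_2(B_1)}$ and $\|f\|_{C^{m-1,\gamma}(B_1^-)}$ coefficients remain bounded by a $k$-independent constant. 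Since $\|u^-\|_{C^{m,\gamma}(B_{1/2}^-)} = \|u_*^-\|_{C^{m,\gamma}(B_{1/2}^+)}$, summing the two estimates gives the claim.

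The only nontrivial ingredient beyond standard Schauder theory is the decoupling: by combining the reflected halves as $p = u^+ - u_*^-$ and $r = ku^+ + u_*^-$, the coupled transmission conditions (whose coefficient $k$ spans the whole range $(0,\infty)$) turn into a pure Dirichlet and a pure Neumann condition on the same half-ball. All ellipticity degeneracy is thereby moved into a linear recovery step controlled by the elementary identity $(1+k^{-1})/(k+1) = k^{-1}$, and I expect verifying that all $k$-weighted coefficients collapse exactly as claimed to be the main point requiring care.
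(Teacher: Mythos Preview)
Your argument is correct, and it takes a genuinely different route from the paper's.  The paper also reflects the lower piece to the upper half-ball, but then keeps the pair $(u^+,u_*^-)$ as a \emph{coupled} elliptic system with boundary conditions $u^+-u_*^-=0$ and $D_d u^+ + k^{-1}D_d u_*^-=0$ (rewritten as $kD_d u^+ + D_d u_*^-=0$ when $k<1$), observes that these are complementing boundary conditions in the sense of Agmon--Douglis--Nirenberg with uniformly bounded constant coefficients, and simply invokes the ADN Schauder theory for systems.  You instead \emph{diagonalize} that same pair into $p=u^+-u_*^-$ and $r=ku^++u_*^-$, which decouples the boundary conditions into a pure Dirichlet problem and a pure (in fact homogeneous-conormal, once the divergence-form right-hand side is taken into account) Neumann problem for the scalar Laplacian.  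Your approach is more elementary---only scalar Schauder estimates are needed, no ADN system theory---and it makes the emergence of the $k^{-1}$ factor completely explicit through the identity $(1+k^{-1})/(k+1)=k^{-1}$, whereas the paper's argument asks the reader to accept that the ADN constant for the coupled system is uniform in $k\in(0,\infty)$ once the boundary operator has been normalized.  The paper's version is shorter to write; yours is more self-contained and arguably more transparent about where the precise $k$-dependence comes from.
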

\begin{proof}
We define $v(x_1,x_2)=u(-x_1,x_2)$ on $B_1^+$. Then when $k\ge 1$, $u$ and $v$ satisfy the elliptic system
$$
\Delta u=f/k,\quad \Delta v=f(-x_1,x_2)\quad\text{on}\quad B_1^+
$$
with the boundary conditions
$$
u-v=0\quad \text{and}\quad D_1 u+k^{-1}D_1 v=0\quad\text{on}\quad B_1\cap\{x_1=0\}.
$$
When $k\in (0,1)$, the boundary conditions can be written as
$$
u-v=0\quad \text{and}\quad kD_1 u+D_1 v=0 \quad\text{on}\quad B_1\cap\{x_1=0\}.
$$
In both cases, the boundary conditions are complementing boundary conditions introduced in \cite{adn} with uniformly bounded (constant) coefficients. Therefore, the desired estimate follows immediately from the classical regularity theory for elliptic systems. See, for instance, \cite{adn}.
\end{proof}

\subsection{Proof of Theorem \ref{mainthm1}}
With the help of Green's function constructed in Section \ref{sec_Green}, and Lemmas \ref{lem_Phi_bound} and \ref{lem2}, we are in the position to consider the non-homogeneous equation \eqref{mainequ} in $B_1$.

\begin{proof}[Proof of Theorem \ref{mainthm1}] By dividing $u$ and $f$ by $C_1$, without loss of generality, we may assume that $C_1=1$.
We take a cutoff function $\eta\in{C}_{0}^{\infty}(B_{1})$ such that $\eta=1$ in $B_{1/2}$. Let $v=u\eta$, which satisfies
\begin{equation}\label{equ410}
D_{i}(aD_{i}v(x))=D_{i}\tilde{f}_{i}+\tilde f_3\quad\mbox{in}~~\mathbb{R}^{2},
\end{equation}
where
$$
\tilde{f}_{i}=f_{i}\eta+auD_{i}\eta,\quad \tilde f_3=-f_{i}D_{i}\eta+aD_{i}uD_{i}\eta.
$$
Since $\mathrm{supp}(D_{i}\eta)\subset\,B_{1}\setminus\overline{B}_{1/2}$, by \eqref{eq11.56} and Lemma \ref{lem2} with a conformal map, we infer that $\tilde{f}_{1}$, $\tilde{f}_{2}$, and $\tilde{f}_{3}$ are compactly supported in $B_1$, piecewise $C^{\gamma}$, and
\begin{equation}
                            \label{eq11.56b}
\|\tilde f_i\|_{C^\gamma(\cB_j)}\le Ck_j,\,\,j=0,1,2,~~i=1,2,3.
\end{equation}

Now define
\begin{align}\label{def_tildeu}
\tilde u(x)&=-\int_{\mathcal{B}_1}D_{y_i}G (x,y)\tilde f_i(y)\,dy-\int_{\mathcal{B}_2}D_{y_i}G (x,y)\tilde f_i(y)\,dy\nonumber\\
&\quad -\int_{\mathcal{B}_0}D_{y_i}G (x,y)\tilde f_i(y)\,dy+\int_{B_1}G(x,y)\tilde f_3(y)\,dy\nonumber\\
&:=-w_1(x)-w_2(x)-w_0(x)+w_3(x),
\end{align}
which is a solution to \eqref{equ410} in $\bR^2$.

{\bf Claim:} We have $v=\tilde u+C_0$ for some constant $C_0$.

Assuming for the moment that the claim above is proved, it suffices for us to estimate $\tilde{u}$ in $B_{1/2}$.

{\bf Gradient estimates.}
To estimate $\tilde u$, we define for $j=0,1,2$,
\begin{equation}
                            \label{eq3.12}
h_{j}(x)=\int_{\mathcal{B}_{j}}D_{y_{i}}\log|x-y|\tilde f_i(y)\,dy
\end{equation}
and
\begin{equation}
                            \label{eq12.22}
g_{j}(x)=\int_{\mathcal{B}_{j}}\log|x-y|\tilde f_3(y)\,dy.
\end{equation}
Since $\log|x-y|$ is the fundamental solution of the Laplace equation in $\mathbb{R}^{2}$, $h_{j}$ and $g_j$ satisfy
$$
\Delta\,h_{j}=-D_{i}(\tilde f_i\chi_{\mathcal{B}_{j}})
\quad \text{and}\quad \Delta\,g_{j}=\tilde f_3\chi_{\mathcal{B}_{j}}
\quad\text{in}\,\,\mathbb{R}^{2}.
$$
Because $\tilde f_i$ is piecewise $C^\gamma$ and the interfaces $\partial\mathcal{B}_{1}$ and $\partial\mathcal{B}_{2}$ are smooth, using Lemma \ref{lem2} we see that for $j=1,2$, $h_{j}$ and $g_j$ are piecewise $C^{1,\gamma}$. Moreover, due to \eqref{eq11.56b},
\begin{equation}
                        \label{eq3.28}
\|h_j\|_{C^{1,\gamma}(B_{3})}+
\|g_j\|_{C^{1,\gamma}(B_{3})}\le C k_j.
\end{equation}
Using Lemma 2.1 in \cite{dz} and the same argument for $g$ as in the proof of Theorem 4.12 there, we see that $h_{0}$ and $g_0$ are also piecewise $C^{1,\gamma}$, and
\begin{equation}
                        \label{eq3.33}
\|h_0\|_{C^{1,\gamma}(B_{3})}+
\|g_0\|_{C^{1,\gamma}(B_{3})}\le C.
\end{equation}

\underline{Estimates in $\mathcal{B}_{0}\cap\,B_{1/2}$:}
First we consider the narrow region between $\mathcal{B}_{1}$ and $\mathcal{B}_{2}$.
For $x\in\mathcal{B}_{0}\cap\,B_{1/2}$, by the definition of $G(x,y)$, we have
\begin{align}\label{def_w1}
w_{1}(x)&=\frac{2}{k_{1}+1}\sum_{l=0}^{\infty}(\alpha\beta)^{l}
\Big(\int_{\mathcal{B}_{1}}D_{y_{i}}\log|(\Phi_{2}\Phi_{1})^{l}(x)-y|\tilde f_i(y)\,dy\nonumber\\
&\qquad-\beta\int_{\mathcal{B}_{1}}D_{y_{i}}\log|(\Phi_{2}\Phi_{1})^{l}\Phi_{2}(x)-y|\tilde f_i(y)\,dy\Big)\nonumber\\
&=\frac{2}{k_{1}+1}\sum_{l=0}^{\infty}(\alpha\beta)^{l}
\Big(h_{1}((\Phi_{2}\Phi_{1})^{l}(x))-\beta\,h_{1}
((\Phi_{2}\Phi_{1})^{l}\Phi_{2}(x))\Big).
\end{align}

By the definition \eqref{def_Phi_complex}, $\|D(\Phi_{2}\Phi_{1})\|_{L^{\infty}(\mathcal{B}_{0})}$ and $\|D\Phi_{2}\|_{L^{\infty}(\mathcal{B}_{0})}$ are bounded by $1$.
For $x\in\mathcal{B}_{0}\cap\,B_{1/2}$, $\Phi_{2}\Phi_{1}(x)$ and $\Phi_{2}(x)$ are both in $\mathcal{B}_{2}$. Using the chain rule and \eqref{def_w1}, we have
\begin{align*}
&|Dw_{1}(x)|=\frac{2}{k_{1}+1}\sum_{l=0}^{\infty}|\alpha\beta|^{l}
\left|\Big(Dh_{1}((\Phi_{2}\Phi_{1})^{l}(x))-\beta\,Dh_{1}((\Phi_{2}\Phi_{1})^{l}
\Phi_{2}(x))\Big)\right|\\
&\le\frac{2}{k_{1}+1}\Big(\|Dh_{1}\|_{L^{\infty}(\mathcal{B}_{0})}+|\beta|
\|Dh_{1}\|_{L^{\infty}(\mathcal{B}_{2})}
\|D\Phi_{2}\|_{L^{\infty}(\mathcal{B}_{0})}\Big)\\
&\quad+\frac{2}{k_{1}+1}\sum_{l=1}^{\infty}|\alpha\beta|^{l}\Big(\|Dh_{1}\|_{L^{\infty}(\mathcal{B}_{2})}\|D(\Phi_{2}\Phi_{1})^{l-1}\|_{L^{\infty}(\mathcal{B}_{2})}\|D(\Phi_{2}\Phi_{1})\|_{L^{\infty}(\mathcal{B}_{0})}\\
&\quad+|\beta|\|Dh_{1}\|_{L^{\infty}(\mathcal{B}_{2})}\|D(\Phi_{2}\Phi_{1})^{l}\|_{L^{\infty}(\mathcal{B}_{2})}\|D\Phi_{2}\|_{L^{\infty}(\mathcal{B}_{0})}
\Big),
\end{align*}
which, thanks to Lemma \ref{lem_Phi_bound} and \eqref{eq3.28}, is bounded by
\begin{align*}
&\le\frac{Ck_1}{k_{1}+1}\Big(1+
\sum_{l=0}^{\infty}|\alpha\beta|^{l}
\|D(\Phi_{2}\Phi_{1})^{l}\|_{L^{\infty}(\mathcal{B}_{2})}\Big)\\
&\le\,\frac{Ck_1}{k_{1}+1}\left(1+
\frac{|\alpha\beta|}{1-\frac{|\alpha\beta|}{(1+2\sqrt{\varepsilon})^{2}}}\right)\\
&\le \frac{Ck_1}{k_{1}+1}\cdot
\frac{1}{1-(1-\sqrt\varepsilon)|\alpha\beta|}.
\end{align*}
Similarly, by the definition of $G(x,y)$,
\begin{align*}
w_{2}(x)&=(1+\beta)\sum_{l=0}^{\infty}(\alpha\beta)^{l}
\Big(\int_{\mathcal{B}_{2}}D_{y_{i}}\log|(\Phi_{1}\Phi_{2})^{l}(x)-y|\tilde f_i(y)\,dy\\
&\qquad-\alpha\int_{\mathcal{B}_{2}}D_{y_{i}}\log|(\Phi_{1}\Phi_{2})^{l}\Phi_{1}(x)-y|\tilde f_i(y)\,dy\Big),\\
&=\frac{2}{k_{2}+1}\sum_{l=0}^{\infty}(\alpha\beta)^{l}\Big(h_{2}((\Phi_{1}\Phi_{2})^{l}(x))-\alpha\,h_{2}((\Phi_{1}\Phi_{2})^{l}\Phi_{1}(x))\Big).
\end{align*}
Since for $x\in\mathcal{B}_{0}\cap\,B_{1/2}$, $(\Phi_{1}\Phi_{2})(x)$ and $\Phi_{1}(x)$, $l\geq1$ are both in $\mathcal{B}_{1}$, using Lemma \ref{lem_Phi_bound} and \eqref{eq3.28},
\begin{align*}
|Dw_{2}(x)|&=\frac{2}{k_{2}+1}\sum_{l=0}^{\infty}|\alpha\beta|^{l}
\left|\Big(Dh_{2}((\Phi_{1}\Phi_{2})^{l}(x))
-\alpha\,Dh_{2}((\Phi_{1}\Phi_{2})^{l}\Phi_{1}(x))\Big)\right|\\
&\le\frac{C k_2}{k_{2}+1}\Big(1
+\sum_{l=0}^{\infty}|\alpha\beta|^{l}
\|D(\Phi_{1}\Phi_{2})^{l}\|_{L^{\infty}(\mathcal{B}_{1})}\Big)\\
&\le \frac{Ck_2}{k_{2}+1}\cdot\frac{1}
{1-(1-\sqrt\varepsilon)|\alpha\beta|}.
\end{align*}
Because
\begin{align*}
&w_{0}(x)\\
&=\int_{\mathcal{B}_{0}}D_{y_{i}}\log|x-y|\tilde f_i(y)\,dy
+\sum_{l=1}^{\infty}\Big[(\alpha\beta)^{l}
\Big(\int_{\mathcal{B}_{0}}D_{y_{i}}\log|(\Phi_{1}\Phi_{2})^{l}(x)-y|\tilde f_i(y)\,dy\\
&\quad+\int_{\mathcal{B}_{0}}D_{y_{i}}\log|(\Phi_{2}\Phi_{1})^{l}(x)
-y|\tilde f_i(y)\,dy\Big)\\
&\quad -(\alpha\beta)^{l-1}\Big(\beta\int_{\mathcal{B}_{0}}D_{y_{i}}
\log|(\Phi_{2}\Phi_{1})^{l-1}\Phi_{2}(x)-y|\tilde f_i(y)\,dy\\
&\quad+\alpha\int_{\mathcal{B}_{0}}D_{y_{i}}\log|(\Phi_{1}\Phi_{2})^{l-1}
\Phi_{1}(x)-y|\tilde f_i(y)\,dy\Big)\Big]\\
&=h_{0}(x)+\sum_{l=1}^{\infty}\Big[(\alpha\beta)^{l}
\Big(h_{0}((\Phi_{1}\Phi_{2})^{l}(x))+h_{0}((\Phi_{2}\Phi_{1})^{l}(x))\Big)\\
&\quad-(\alpha\beta)^{l-1}\Big(\beta\,h_{0}((\Phi_{2}\Phi_{1})^{l-1}\Phi_{2}(x))
+\alpha\,h_{0}((\Phi_{1}\Phi_{2})^{l-1}\Phi_{1}(x))\Big)\Big],
\end{align*}
using the same argument as above, from Lemma \ref{lem_Phi_bound} and \eqref{eq3.33}, we have
\begin{align*}
|Dw_{0}(x)|&\le\,C
+C\sum_{l=0}^{\infty}|\alpha\beta|^{l}\Big(\|D(\Phi_{1}\Phi_{2})^{l}\|_{L^{\infty}(\mathcal{B}_{1})}+\|D(\Phi_{2}\Phi_{1})^{l}\|_{L^{\infty}(\mathcal{B}_{2})}\Big)\\
&\le\frac{C}{1-(1-\sqrt\varepsilon)|\alpha\beta|}.
\end{align*}

The estimate of $w_3$ is also similar by using \eqref{eq12.22}, \eqref{eq3.28}, and \eqref{eq3.33}.
Therefore, recalling $\tilde{u}=-w_{1}-w_{2}-w_{0}+w_3$, we have, for $x\in\mathcal{B}_{0}\cap\,B_{1/2}$,
\begin{equation}
                                    \label{eq4.24}
|D\tilde u|\le\frac{C}{1-(1-\sqrt\varepsilon)|\alpha\beta|}.
\end{equation}

\underline{Estimates in $\mathcal{B}_{1}\cap\,B_{1/2}$:}
In this case, we have $(\Phi_{1}\Phi_{2})(x)\in\mathcal{B}_{1}$ and $\Phi_2(x)\in \cB_2$. By the definition \eqref{def_tildeu} and the same argument as in the case $x\in\mathcal{B}_{0}\cap\,B_{1/2}$, we have
\begin{align*}
w_{1}(x)=&\int_{\mathcal{B}_1}D_{y_i}G (x,y)\tilde f_i(y)\,dy\\
=&\frac{1}{k_{1}}\int_{\mathcal{B}_1}D_{y_i}\log|x-y|\tilde f_i(y)\,dy+\frac{\alpha}{k_{1}}\int_{\mathcal{B}_1}D_{y_i}\log|\Phi_{1}(x)-y|\tilde f_i(y)\,dy\\
&-{\frac{4\beta}{(k_{1}+1)^2}}\sum_{l=0}^{\infty}(\alpha\beta)^{l}
\int_{\mathcal{B}_1}D_{y_i}\log|(\Phi_{2}\Phi_{1})^{l}\Phi_{2}(x)-y|\tilde f_i(y)\,dy\\
=&\frac{1}{k_{1}}h_{1}(x)+\frac{\alpha}{k_{1}}h_{1}(\Phi_{1}(x))
-{\frac{4\beta}{(k_{1}+1)^2}}\sum_{l=0}^{\infty}(\alpha\beta)^{l}h_{1}((\Phi_{2}\Phi_{1})^{l}\Phi_{2}(x)).
\end{align*}
Thus, using Lemma \ref{lem_Phi_bound} and \eqref{eq3.28},
$$
|Dw_{1}(x)|\le\,C+\frac{C}{(k_{1}+1)^{2}}\cdot\frac{1}
{1-(1-\sqrt\varepsilon)|\alpha\beta|}\qquad\mbox{in}\,\,
\mathcal{B}_{1}\cap\,B_{1/2}.
$$
Since
\begin{align*}
w_{0}(x)=&\int_{\mathcal{B}_0}D_{y_i}G (x,y)\tilde f_i(y)\,dy\\
=&\frac 2 {k_1+1}\sum_{l=0}^{\infty}(\alpha\beta)^{l}
\Big(\int_{\mathcal{B}_0}D_{y_i}\log|(\Phi_{1}\Phi_{2})^{l}(x)-y|\tilde f_i(y)\,dy\\
&-\beta\int_{\mathcal{B}_0}D_{y_i}\log|(\Phi_{2}\Phi_{1})^{l}\Phi_{2}(x)-y|\tilde f_i(y)\,dy\Big)\\
=&\frac 2 {k_1+1}\sum_{l=0}^{\infty}(\alpha\beta)^{l}
\Big(h_{0}((\Phi_{1}\Phi_{2})^{l}(x))-\beta\,h_{0}((\Phi_{2}\Phi_{1})^{l}
\Phi_{2}(x))\Big),
\end{align*}
we have
$$
|Dw_{0}(x)|\le\frac{C}{k_{1}+1}\cdot\frac{1}
{1-(1-\sqrt\varepsilon)|\alpha\beta|}
\qquad\mbox{in}\,\,\mathcal{B}_{1}\cap\,B_{1/2}.
$$
Similarly, in $\mathcal{B}_{1}\cap\,B_{1/2}$,
\begin{align*}
|Dw_{2}(x)|&\le\frac{Ck_2}{(k_{1}+1)(k_{2}+1)}
\cdot\frac{1}{1-(1-\sqrt\varepsilon)|\alpha\beta|},\\
|Dw_{3}(x)|&\le C+
\frac{C}{k_{1}+1}\cdot\frac{1}
{1-(1-\sqrt\varepsilon)|\alpha\beta|}.
\end{align*}
Therefore, we have
\eqref{eq4.24} in $\mathcal{B}_{1}\cap\,B_{1/2}$.

\underline{Estimates in $\mathcal{B}_{2}\cap\,B_{1/2}$:}
In this case, we have $(\Phi_{2}\Phi_{1})(x)\in\mathcal{B}_{2}$ and $\Phi_1(x)\in \cB_1$.
By exactly the same argument as in the case $x\in\mathcal{B}_{1}\cap\,B_{1/2}$, we get \eqref{eq4.24}.
Hence, estimate \eqref{equ_mainresult} is proved.

{\bf Higher derivative estimates.}
As before, we may assume that $C_m=1$. By Lemma \ref{lem2} and \eqref{eq4.43}, $\tilde{f}_{i}$ are piecewise $C^{m-1,\gamma}$, and
$$
\|\tilde f_i\|_{C^{m-1,\gamma}(B_1\cap \cB_j)}\le Ck_j,\,\,j=0,1,2,\,\,i=1,2,3,
$$
which yields
\begin{equation}
                                    \label{eq4.433}
\|h_j\|_{C^{m,\gamma}(B_{3})}+
\|g_j\|_{C^{m,\gamma}(B_{3})}\le C k_j.
\end{equation}
Recall Fa\`{a} di Bruno's formula for composition
$$
\frac{d^{m}}{dt^{m}}f(g(t))=\sum\frac{m!}{s_{1}!s_{2}!\cdots s_{n}!}f^{(s)}(g(t))\Big(\frac{g'(t)}{1!}\Big)^{s_{1}}
\Big(\frac{g''(t)}{2!}\Big)^{s_{2}}\cdots
\Big(\frac{g^{(n)}(t)}{n!}
\Big)^{s_{n}},
$$
where the summation is over all positive integer solutions of the Diophantine equation
$$
s_{1}+2s_{2}+\cdots+ns_{n}=m
$$
and $s:=s_{1}+s_{2}+\cdots+s_{n}$.
Using \eqref{Phi21m}, we obtain
\begin{align*}
&|D^{m}_{x}h_{1}((\Phi_{2}\Phi_{1})^{l}(x))|\\
&\le\,\sum C|D_{x}(\Phi_{2}\Phi_{1})^{l}(x)|^{s_{1}}
\cdot|D^{2}_{x}(\Phi_{2}\Phi_{1})^{l}(x)|^{s_{2}}
\cdot|D^{n}_{x}(\Phi_{2}\Phi_{1})^{l}(x)|^{s_{n}}\le\frac{Cl^{m-1}}{\lambda_{2}^{2l}},
\end{align*}
where $C>0$ is a constant depending only on $m$.
Therefore, for instance in $\cB_0\cap B_{1/2}$, using \eqref{def_w1} and \eqref{eq4.433},
\begin{align*}
|D^{m}w_{1}(x)|=&\frac{2}{k_{1}+1}
\sum_{l=0}^{\infty}|\alpha\beta|^{l}\left|\Big(D^{m}_{x}h_{1}
((\Phi_{2}\Phi_{1})^{l}(x))-\beta\,D^{m}_{x}h_{1}((\Phi_{2}\Phi_{1})^{l}
\Phi_{2}(x))\Big)\right|\\
\le&\frac{C{k_1}}{k_{1}+1}\Big(1+\sum_{l=1}^{\infty}|\alpha\beta|^{l}
\frac{Cl^{m-1}}{(1+2\sqrt{\varepsilon})^{2l}}\Big)\\
\le&\frac{C{k_1}}{k_{1}+1}\cdot\frac{1}{\big(
1-(1-\sqrt\varepsilon)|\alpha\beta|\big)^m}.
\end{align*}
In the same way, we bound $|D^mw_{0}|$, $|D^m w_{2}|$, and $|D^m w_3|$ in all the three regions, then for $|D^m \tilde{u}|$. Therefore, we obtain the upper bound \eqref{equ_mainresultm} for higher-order derivatives  $|D^{m}\tilde u|$.

{\bf Proof of the claim.}
We consider the growth property of $\tilde u$ as $x\to \infty$.
First we estimate $w_1$. From the definition of $h_1$ in \eqref{eq3.12} and the fact that $\tilde f_i\in C^\alpha(\cB_1)$, we have $|h_1(x)|\le C/(1+|x|)$.
It then follows from \eqref{def_w1} that $\|w_1\|_{L_\infty(\bR^2)}\le C$ for some constant $C$ depending only on $k_1$, $k_2$, and $\varepsilon$.
Similarly, we have
$$
\|w_0\|_{L_\infty(\bR^2)}+\|w_2\|_{L_\infty(\bR^2)}\le C.
$$
On the other hand, from \eqref{eq12.22}, we have $|g_j(x)|\le C\log(|x|+e)$, which gives that $|w_3(x)|\le C\log(|x|+e)$. Therefore,
$$
|\tilde u(x)|\le C\log(|x|+e),
$$
and because of the boundedness of $v$,
\begin{equation}
                            \label{eq3.56}
|\tilde u(x)-v(x)|\le C\log(|x|+e).
\end{equation}
Since $\tilde u-v$ satisfies the homogeneous equation $D_i(aD_i(\tilde u-v))=0$, by the De Giorgi-Nash-Moser estimate, for any $R>0$, we have for some $\alpha_0\in (0,1)$,
$$
[\tilde u-v]_{C^{\alpha_0}(B_{R/2})}\le CR^{-\alpha_0}\|\tilde u-v\|_{L_\infty(B_R)},
$$
which goes to zero as $R\to \infty$ thanks to \eqref{eq3.56}.
The claim is proved.

The proof of Theorem \ref{mainthm1} is completed.
\end{proof}

\subsection{A lower bound for the gradient}\label{subsec_lowerbound}

Assume that $k_1,k_2\ge 1$. Choose $f_1$ to be a nonnegative function supported in $B_{1/10}(-3)$ even in $x_2$ with unit integral, and $f_2\equiv 0$. Define
$$
u(x):=-\int_{B_{1/10}(-3)}D_{y_1}G (x,y)f_1(y)\,dy
$$
and
$$
h(x):=-\int_{B_{1/10}(-3)}D_{y_1}\log|x-y|f_1(y)\,dy.
$$
Then
\begin{align*}
u(x)=&h(x)+\sum_{l=1}^\infty\Big[(\alpha\beta)^l \Big(h((\Phi_1\Phi_2)^l(x))+
h((\Phi_2\Phi_1)^l(x))\Big)\\
&-(\alpha\beta)^{l-1} \Big(\beta h((\Phi_2\Phi_1)^{l-1}\Phi_2(x))+
\alpha h((\Phi_1\Phi_2)^{l-1}\Phi_1(x))\Big)\Big].
\end{align*}
Denote
$$
S:=\{(x_1,0):x_1\in [-(\varepsilon+\varepsilon^2/4),(\varepsilon+\varepsilon^2/4)]\}.
$$
Note that $h$ is also even with respect to $x_2$.
By a simple calculation, we have
\begin{equation}
                                \label{eq3.16}
D_1h(x)\le -c_0<0\quad\text{and}\quad D_2h(x)=0\quad \text{on}\,\,S
\end{equation}
for some constant $c_0>0$. It is easily seen that for any $l\ge 1$,
$$
(\Phi_1\Phi_2)^l(0),\,\,(\Phi_2\Phi_1)^l(0),\,\,
(\Phi_2\Phi_1)^{l-1}\Phi_2(0),\,\, (\Phi_1\Phi_2)^{l-1}\Phi_1(0)\in S
$$
and from the proof of Lemma \ref{lem_Phi_bound} (cf. \eqref{firstderivative} and \eqref{firstderivative12}),
\begin{align*}
&D_1(\Phi_1\Phi_2)^l(0)\ge C/\lambda^{2l},\qquad D_1(\Phi_2\Phi_1)^l(0)\ge C/\lambda^{2l},\\
&D_1\big((\Phi_2\Phi_1)^{l-1}\Phi_2\big)(0)\le -C/\lambda^{2l}, \qquad
D_1\big((\Phi_1\Phi_2)^{l-1}\Phi_1\big)(0)\le -C/\lambda^{2l}
\end{align*}
for some $C>0$ independent of $l(\ge 1)$.
Therefore, by the chain rule,
\begin{align*}
D_1u(0)=&D_1h(0)+\sum_{l=1}^\infty\Big[(\alpha\beta)^l \Big((D_1h)((\Phi_1\Phi_2)^l(0))D_1(\Phi_1\Phi_2)^l(0)\\
&+(D_1h)((\Phi_2\Phi_1)^l(0))D_1(\Phi_2\Phi_1)^l(0)\Big)\\
&-(\alpha\beta)^{l-1} \Big(\beta (D_1h)((\Phi_2\Phi_1)^{l-1}\Phi_2(0))
D_1\big((\Phi_2\Phi_1)^{l-1}\Phi_2\big)(0)\\
&+\alpha D_1h((\Phi_1\Phi_2)^{l-1}\Phi_1(0))
D_1\big((\Phi_1\Phi_2)^{l-1}\Phi_1\big)(0)\Big)\Big].
\end{align*}
which, in view of \eqref{eq3.16}, is less than $-c_0$ multiplied by
$$
1+C\sum_{l=1}^\infty\Big[(\alpha\beta)^l\lambda^{-2l}
+(\alpha\beta)^{l-1} \Big(\beta \lambda^{-2l}+\alpha \lambda^{-2l}\Big)\Big]\ge \frac C {1-(1-\sqrt\varepsilon)\alpha\beta}.
$$
Therefore, \eqref{equ_mainresult_lower} is proved.

\subsection{Proof of Theorem \ref{mainthm2}}
The proof is similar to that of Theorem \ref{mainthm1} with some modifications, which we shall point out. We assume that $C_1=1$.
Take a cutoff function $\eta\in{C}_{0}^{\infty}(\cD_2)$ such that $\eta=1$ on $\cD_1$. Then $v:=u\eta$ satisfies \eqref{equ410}, where
$$
\tilde{f}_{i}=f_{i}\eta+uD_{i}\eta,\quad \tilde f_3=-f_{i}D_{i}\eta+D_{i}uD_{i}\eta.
$$
By the interior estimates for elliptic equations with constant coefficients, instead of \eqref{eq11.56b} we have
\begin{equation}
                            \label{eq11.56d}
\|\tilde f_i\|_{C^\gamma(\cB_j)}\le C\min\{k_j, 1\},\,\,j=0,1,2,\,\,i=1,2,3.
\end{equation}
Now we define $\tilde u$, $w_i,i=0,\ldots,3$ as in \eqref{def_tildeu} as well as $h_j$ and $g_j$ in \eqref{eq3.12} and \eqref{eq12.22}. Using \eqref{eq11.56d}, it holds that
\begin{equation}
                        \label{eq3.28d}
\|h_j\|_{C^{1,\gamma}(B_{3})}+
\|g_j\|_{C^{1,\gamma}(B_{3})}\le C \min\{k_j,1\}.
\end{equation}
As in the proof of Theorem \ref{mainthm1}, in $\cB_0\cap B_{1/2}$, by using \eqref{eq3.28d} we have
\begin{align*}
&|Dw_1(x)|\le \frac{C\min\{k_1,1\}}{k_{1}+1}\cdot
\frac{1}{1-(1-\sqrt\varepsilon)|\alpha\beta|},\\
&|Dw_2(x)|\le \frac{C\min\{k_2,1\}}{k_{2}+1}\cdot
\frac{1}{1-(1-\sqrt\varepsilon)|\alpha\beta|},\\
&|Dw_0(x)|+|Dw_3(x)|\le
\frac{1}{1-(1-\sqrt\varepsilon)|\alpha\beta|},
\end{align*}
which yield \eqref{eq4.24}.
While in $\cB_1\cap B_{1/2}$, we have
\begin{align*}
&|Dw_1(x)|\le \frac {C\min\{k_1,1\}} {k_1}+
\frac{C}{(k_{1}+1)^{2}}\cdot\frac{1}
{1-(1-\sqrt\varepsilon)|\alpha\beta|},\\
&|Dw_2(x)|\le \frac{C\min\{k_2,1\}}{(k_{1}+1)(k_{2}+1)}
\cdot\frac{1}{1-(1-\sqrt\varepsilon)|\alpha\beta|}
,\\
&|Dw_0(x)|+|Dw_3(x)|\le
\frac{C}{k_{1}+1}\cdot\frac{1}
{1-(1-\sqrt\varepsilon)|\alpha\beta|},
\end{align*}
which yield
$$
|D\tilde u(x)|\le
\frac{C}{k_{1}+1}\cdot\frac{1}
{1-(1-\sqrt\varepsilon)|\alpha\beta|}.
$$
Similarly, in $\cB_2\cap B_{1/2}$,
$$
|D\tilde u(x)|\le
\frac{C}{k_{2}+1}\cdot\frac{1}
{1-(1-\sqrt\varepsilon)|\alpha\beta|}.
$$
The theorem is proved.

\section{Proof of Theorem \ref{mainthm3}}\label{sec_thm3}

For general $r_{1}$ and $r_{2}$, the inversion maps with respect to $\partial B_1$ and $\partial B_2$ in the complex variable are given by
\begin{equation*}
\Phi_1(z):=\frac{r_{1}^{2}}{\bar z-(r_{1}+\varepsilon/2)}+r_{1}+\varepsilon/2,\qquad
\Phi_2(z):=\frac{r_{2}^{2}}{\bar z+(r_{2}+\varepsilon/2)}-(r_{2}+\varepsilon/2).
\end{equation*}
Then
\begin{align*}
&(\Phi_{2}\Phi_{1})(z):=\Phi_{2}\circ\Phi_{1}(z)\\
&=\frac{r_{2}^{2}(z-(r_{1}+\varepsilon/2))}{(r_{1}+r_{2}+\varepsilon) z-(r_{1}+\varepsilon/2)(r_{1}+r_{2}+\varepsilon)+r_{1}^{2}}-(r_{2}+\varepsilon/2).
\end{align*}
Using a translation and dilation of coordinates
$$
(r_{1}+r_{2}+\varepsilon) z-(r_{1}+\varepsilon/2)(r_{1}+r_{2}+\varepsilon)+r_{1}^{2}\to z
$$
and
$$(r_{1}+r_{2}+\varepsilon) (\Phi_{2}\Phi_{1})(z)-(r_{1}+\varepsilon/2)(r_{1}+r_{2}+\varepsilon)+r_{1}^{2}\to (\Phi_{2}\Phi_{1})(z),$$ we obtain
$$
(\Phi_{2}\Phi_{1})(z):=-\frac{r_{1}^{2}r_{2}^{2}}{z}-\Big(2r_{1}r_{2}+2(r_{1}+r_{2})\varepsilon+\varepsilon^{2}\Big).
$$
The two fixed points of the map $(\Phi_{2}\Phi_{1})$ are given by
\begin{align*}
\lambda_{1}&=-(r_{1}r_{2}+(r_{1}+r_{2})\varepsilon+\varepsilon^{2}/2)\\
&\quad +\sqrt{r_{1}r_{2}\Big(2(r_{1}+r_{2})\varepsilon+\varepsilon^{2}\Big)
+\Big((r_{1}+r_{2})\epsilon+\varepsilon^{2}/2\Big)^{2}}\\
&\sim r_{1}r_{2}\Big(-1+\sqrt{2(1/r_{1}+1/r_{2})\varepsilon}\Big)
\quad\text{in}\,\,\mathcal{B}_{1}
\end{align*}
and
$$
\lambda_{2}\sim r_{1}r_{2}\Big(-1-\sqrt{2(1/r_{1}+1/r_{2})\varepsilon}\Big)
\quad\text{in}\,\,\mathcal{B}_{2}.
$$

Since for $x\in\mathcal{B}_{0}\cap\,B_{1/2}$, $(\Phi_{2}\Phi_{1})^{l}(x)$ and $(\Phi_{2}\Phi_{1})^{l-1}\Phi_{2}(x)$, $l\geq1$, are all in $\mathcal{B}_{2}$, we here choose $\lambda_{2}$. By a similar calculation as before,
$$
\frac 1 {(\Phi_{2}\Phi_{1})(z)-\lambda_{2}}-\frac{\lambda_{2}}{r_{1}^{2}r_{2}^{2}-\lambda_{2}^2}=
\frac{\lambda_{2}^2}{r_{1}^{2}r_{2}^{2}}\Big(\frac{1}{z-\lambda_{2}}-\frac{\lambda_{2}}{r_{1}^{2}r_{2}^{2}-\lambda_{2}^2}\Big).
$$
By iteration, we have for any $z\neq \lambda_{2}$ and $k\ge 0$,
\begin{align*}
(\Phi_{2}\Phi_{1})^{l}(z)&=\lambda_{2}
+\big(\lambda_{2}-r_{1}^{2}r_{2}^{2}/\lambda_2\big)
\big(r_{1}r_{2}/\lambda_{2}\big)^{2l}\cdot \frac{1}{1-\big(r_{1}r_{2}/\lambda_{2}\big)^{2l}}\\
&\quad\cdot\Big(1+\frac{r_{1}^{2}r_{2}^{2}\lambda_2^{-1}-\lambda_{2}}
{z\Big(1-\big(r_{1}r_{2}/\lambda_{2}\big)^{2l}\Big)
-\Big(r_{1}^{2}r_{2}^{2}\lambda_2^{-1}-\lambda_{2}
\big(r_{1}r_{2}/\lambda_{2}\big)^{2l}\Big)}\Big).
\end{align*}
Therefore,
\begin{align*}
&D(\Phi_{2}\Phi_{1})^{l}(z)
=(\lambda_{2}-r_{1}^{2}r_{2}^{2}/\lambda_{2})^2
\big(r_{1}r_{2}/\lambda_{2}
\big)^{2l}\\
&\qquad\cdot\Big(\big(z-r_{1}^{2}r_{2}^{2}\lambda_2^{-1}\big)\Big(1-
\big(r_1r_2/\lambda_{2}\big)^{2l}\Big)
+(\lambda_{2}-r_{1}^{2}r_{2}^{2}\lambda_2^{-1})\big(r_1r_2
/\lambda_{2}\big)^{2l}\Big)^{-2}.
\end{align*}
Since
$$
\lambda_{2}-r_{1}^{2}r_{2}^{2}\lambda_2^{-1}\sim -r_{1}r_{2}\sqrt{2(1/r_{1}+1/r_{2})\varepsilon}
$$
and for $z\in \mathcal{B}_{2}$ (in the new coordinates),
\begin{align*}
&\mathrm{Re}~ (z-r_{1}^{2}r_{2}^{2}\lambda_2^{-1})\\
&\le -(r_{1}+r_{2}+\varepsilon)\varepsilon/2-(r_{1}+\varepsilon/2)(r_{1}+r_{2}+\varepsilon)+r_{1}^{2}-r_{1}^{2}r_{2}^{2}\lambda_2^{-1}\\
&=r_{1}r_{2}\Big(\frac{1}{1+\sqrt{2(1/r_{2}+1/r_{2})\varepsilon}}-1\Big)+O(\varepsilon) \\
&\lesssim -r_{1}r_{2}\sqrt{(1/r_{1}+1/r_{2})\varepsilon},
\end{align*}
we have
\begin{align*}
&\Big|\big(z-r_{1}^{2}r_{2}^{2}\lambda_2^{-1}\big)\Big(1-
\big(r_1r_2/\lambda_{2}\big)^{2l}\Big)
+(\lambda_{2}-r_{1}^{2}r_{2}^{2}\lambda_2^{-1})
\big(r_1r_2/\lambda_{2}\big)^{2l}\Big|\\
&\gtrsim r_1r_2\sqrt{(1/r_1+1/r_2)\varepsilon}.
\end{align*}
Thus, for $l\geq1$,
$$
|D(\Phi_{2}\Phi_{1})^{l}(z)|\le \frac{C}{\big(1+\sqrt{2(1/r_{1}
+1/r_{2})\varepsilon}\big)^{2l}}.
$$
Similarly, we can bound $D(\Phi_1\Phi_2)$ as well as the higher derivatives of $\Phi_1\Phi_2$ and $\Phi_2\Phi_1$.
Thus we obtain a generalization of Lemma \ref{lem_Phi_bound}, which shows the dependence of $r_{1}$ and $r_{2}$.

\begin{lemma}\label{lem_Phi_boundr1r2}
For any integers $l\ge 0$ and $m\ge 1$,
\begin{equation*}
|D^m(\Phi_{2}\Phi_{1})^{l}(z)|\le \frac{Cl^{m-1}}{\big(1+\sqrt{2(1/r_{1}+1/r_{2})\varepsilon}\big)^{2l}}\quad
\text{in}\,\,\mathcal{B}_{2},
\end{equation*}
and
\begin{equation*}
|D^m(\Phi_{1}\Phi_{2})^{l}(z)|\le \frac{Cl^{m-1}}{\big(1+\sqrt{2(1/r_{1}+1/r_{2})\varepsilon}\big)^{2l}}\quad
\text{in}\,\,\mathcal{B}_{1},
\end{equation*}
where $C$ depends only on $m$.
\end{lemma}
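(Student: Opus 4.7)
The proof parallels that of Lemma \ref{lem_Phi_bound}; most of the preparation for $\Phi_2\Phi_1$ on $\mathcal{B}_2$ has already been carried out in the paragraphs preceding the statement, culminating in the first-derivative bound. What remains is to extend to higher derivatives and to handle $\Phi_1\Phi_2$ on $\mathcal{B}_1$.

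For higher derivatives of $(\Phi_2\Phi_1)^l$ on $\mathcal{B}_2$, my plan is to differentiate the explicit iteration formula $m$ times in $z$. Since the bracketed expression in that formula is affine-linear-fractional in $z$, its $m$-th derivative is, up to a constant depending only on $m$,
\[
(\lambda_2 - r_1^2r_2^2/\lambda_2)^2 (r_1r_2/\lambda_2)^{2l}\bigl(1-(r_1r_2/\lambda_2)^{2l}\bigr)^{m-1} D^{-(m+1)},
\]
where $D := (z - r_1^2r_2^2/\lambda_2)(1-(r_1r_2/\lambda_2)^{2l}) + (\lambda_2 - r_1^2r_2^2/\lambda_2)(r_1r_2/\lambda_2)^{2l}$ is the quantity whose lower bound $|D|\gtrsim r_1 r_2\sqrt{(1/r_1+1/r_2)\varepsilon}$ has been established just before the lemma. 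Combined with the crude inequality $|1-(r_1r_2/\lambda_2)^{2l}| \le C l \sqrt{\varepsilon}$ (coming from $r_1 r_2/\lambda_2 = -(1+O(\sqrt\varepsilon))^{-1}$), the numerator contributes an $l^{m-1}$ factor while the factors of $r_1 r_2 \sqrt\varepsilon$ from the denominator cancel against the $(\lambda_2 - r_1^2r_2^2/\lambda_2)^2 \sim r_1^2 r_2^2 \varepsilon$ in the numerator, leaving the decay $(r_1 r_2/\lambda_2)^{2l} \le (1+\sqrt{2(1/r_1+1/r_2)\varepsilon})^{-2l}$.

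For $\Phi_1\Phi_2$ on $\mathcal{B}_1$, I would run the symmetric computation. In complex form, after an analogous translation-dilation one obtains $(\Phi_1\Phi_2)(z) = -r_1^2 r_2^2/z + (2r_1 r_2 + 2(r_1+r_2)\varepsilon + \varepsilon^2)$, whose fixed points $\tilde\lambda_1, \tilde\lambda_2$ are the negatives of $\lambda_1, \lambda_2$, so $\tilde\lambda_1 \sim r_1 r_2(1+\sqrt{2(1/r_1+1/r_2)\varepsilon})$ lies in $\mathcal{B}_1$. Iterating about $\tilde\lambda_1$ yields an expression of identical structure to that for $(\Phi_2\Phi_1)^l$; the sign of $\mathrm{Re}(z - r_1^2r_2^2/\tilde\lambda_1)$ flips for $z\in\mathcal{B}_1$, but $|\mathrm{Re}(z - r_1^2r_2^2/\tilde\lambda_1)| \gtrsim r_1 r_2 \sqrt{(1/r_1+1/r_2)\varepsilon}$ remains, so exactly the same final bound emerges.

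The main obstacle is bookkeeping rather than technique: one must check carefully that when $r_1, r_2$ are general the various factors of $r_1 r_2$ and $\sqrt\varepsilon$ appearing in the numerator and denominator cancel cleanly, and that the $(r_1,r_2)$-dependence entering through $|\lambda_2|/(r_1 r_2) \sim 1 + \sqrt{2(1/r_1+1/r_2)\varepsilon}$ is exactly the quantity that appears in the statement. Once this accounting is done correctly for the first derivative (as in the text) and for one higher-derivative case, the general $m$ follows from the explicit formula without further difficulty.
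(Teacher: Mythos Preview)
Your proposal is correct and follows exactly the paper's approach: the paper itself merely states ``Similarly, we can bound $D(\Phi_1\Phi_2)$ as well as the higher derivatives of $\Phi_1\Phi_2$ and $\Phi_2\Phi_1$,'' and you have filled in precisely the details that this ``similarly'' entails---differentiating the explicit iteration formula $m$ times, using the already-established lower bound on the denominator together with the bound $|1-(r_1r_2/\lambda_2)^{2l}|\le Cl\sqrt{(1/r_1+1/r_2)\varepsilon}$, and then repeating the symmetric computation about $\tilde\lambda_1$ for $\Phi_1\Phi_2$ on $\mathcal{B}_1$. One small refinement: where you write $|1-(r_1r_2/\lambda_2)^{2l}|\le Cl\sqrt{\varepsilon}$, the factor should be $Cl\sqrt{(1/r_1+1/r_2)\varepsilon}$, which is exactly what is needed for the $\delta$-powers to cancel cleanly against those in the denominator bound; this is part of the bookkeeping you already flagged.
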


Proposition \ref{prop1} still holds with obvious modifications. Using Lemma \ref{lem_Phi_boundr1r2} instead of Lemma \ref{lem_Phi_bound}, by the same
procedure as in the proof of Theorem \ref{mainthm2}, Theorem \ref{mainthm3} is proved.

\medskip

\noindent {\bf Funding:} Hongjie Dong was partially supported by the NSF under agreement DMS-1056737. Haigang Li was partially supported by  NSFC (11571042) (11371060), Fok Ying Tung Education Foundation (151003), and the Fundamental Research Funds for the Central Universities.

\noindent {\bf Conflict of Interest:} The authors declare that they have no conflict of interest.

\bibliographystyle{plain}

\def\cprime{$'$}

\end{document}